\theoremstyle{plain}
\newtheorem{thm}{Theorem}
\newtheorem{prop}[thm]{Proposition}
\newtheorem{lem}[thm]{Lemma}
\newtheorem{pb}{Problem}
\newtheorem*{ta}{Theorem A}
\newtheorem*{tb}{Theorem B}
\theoremstyle{definition}
\newtheorem{df}{Definition}
\theoremstyle{remark}
\newtheorem{rmk}{Remark}
\newtheorem*{acks}{Acknowledgments}
\newcommand{\ie}{\textit{i.e. }}
\newcommand{\p}{\mathbb{P}}
\newcommand{\R}{\mathcal{R}}
\newcommand{\G}{\mathrm{Gr}}
\newcommand{\OO}{\mathcal{O}}
\newcommand{\II}{\mathcal{I}}
\newcommand{\Sym}{\mathcal{S}ym}
\newcommand{\abs}[1]{\lvert#1\rvert}
\newcommand{\gen}[1]{\langle#1\rangle}
\newcommand{\CC}{{\mathbb C}}
\newcommand{\de}{\partial}
\DeclareMathOperator{\Gl}{GL}
\DeclareMathOperator{\bir}{Bir}
\DeclareMathOperator{\Hilb}{Hilb}
\DeclareMathOperator{\Ext}{Ext}
\def\cocoa{{\hbox{\rm C\kern-.13em o\kern-.07em C\kern-.13em o\kern-.15em A}}}
\DeclareMathOperator{\PGL}{\mathbb{P}GL}
\DeclareMathOperator{\proj}{Proj}
\DeclareMathOperator{\spec}{Spec}
\DeclareMathOperator{\VPS}{VPS}
\begin{document}

\title{Gonality, Apolarity and hypercubics}
\author{Pietro De Poi \and Francesco Zucconi}

\thanks{This research was partially supported by MiUR,
project ``Geometria delle variet\`a algebriche e dei loro spazi di moduli'' 
and Regione Friuli Venezia Giulia,
``Progetto D4'' for the first author, and by MiUR,
project ``Spazi di moduli e teorie di Lie'' for the second one}

% \address{Pietro De Poi: Dipartimento di Sistemi e istituzioni per l'economia\\
% Universit\`a degli Stud\^\i\ dell'Aquila\\
% Piazza Del Santuario 19\\
% 67040 Roio Poggio (AQ)\\
% Italy
% }
\address{
%Francesco Zucconi: 
Dipartimento di Matematica e Informatica\\
Universit\`a degli Stud\^\i\ di Udine\\
Via delle Scienze, 206\\
Loc. Rizzi\\
33100 Udine\\
Italy
}

\email{pietro.depoi@dimi.uniud.it} \email{francesco.zucconi@dimi.uniud.it}
\keywords{Gonality; apolarity; Waring number; rational normal scrolls}
\subjclass[2000]{Primary 14H51; Secondary 14H45, 13H10, 14M05, 14N05}
\date{\today}

%%%%%%%%%%%%%%
\begin{abstract} 
We show that any Fermat hypercubic is apolar to a trigonal curve, and vice versa. 
We show also that the Waring number of the 
polar hypercubic associated to a %general 
tetragonal curve of genus $g$
is at most $\lceil\frac{3}{2}g-\frac{7}{2}\rceil$, and for an important class of them is 
at most $\frac{4}{3}g-\frac{5}{3}$. 
%Finally, we give an asymptotic estimate for the degree of the surfaces which contain the general $n$-gonal curve. 
\end{abstract}
%%%%%%%%%%%%%%
\maketitle

\bibliographystyle{amsalpha}

\section{Introduction} 
Let $C$ be a non-hyperelliptic, smooth, projective curve 
of genus $g$ defined over
$\mathbb C$ and let
$\R_{C}:=\oplus_{i=0}^{\infty}H^{0}(C,\omega_{C}^{\otimes i})$ be its 
canonical ring.
It is well-known that $C$ is isomorphic to the canonical curve $\proj(\R_{C})$, 
which embeds in 
$ \mathbb P^{g-1}$ as a projectively normal variety
and the homogeneous ideal ${\II}_{C}$ of $C$ in $\mathbb P^{g-1}$ 
is generated in degree
$2$ unless $C$ is trigonal or a smooth plane quintic, see 
\cite{ACGH}.

Since Green's
seminal papers, \cite{g1}, \cite{g3} and \cite{g2}, the syzygies of $\R_{C}$ have been studied
deeply by several authors; here we can quote, for instance, \cite{Sch}, \cite{mv}, \cite{v}.

In this paper we follow an approach we learned from \cite{IR}. 
We put $ \mathbb P^{g-1}:= \proj(\mathbb C[\partial_{0},\dotsc,
\partial_{g-1}])$ where $\mathbb C[\partial_{0},\dotsc,
\partial_{g-1}]$ is the polynomial ring generated by the natural
derivations over $\mathbb C[x_{0},\dotsc,
x_{g-1}]$. We consider $\eta_{1},\eta_{2}$ two general
linear forms in $\mathbb C[\partial_{0},\dotsc,
\partial_{g-1}]$ which can be assumed to be $\eta_{1}=\partial_{g-1}$ and
$\eta_{2}=\partial_{g-2}$ and we construct the ring $A:=
\frac{\R_{C}}{\gen{\eta_{1},\eta_{2}}}$. An 
easy computation shows that $A$ is an Artinian graded Gorenstein ring of
socle degree $3$. Then, by a result of Macaulay, it can be
realised as $A=\frac{\mathbb C[\partial_{0},\dotsc,
\partial_{g-3}]}{F_{\eta_{1},\eta_{2}}^\perp}$
where $F_{\eta_{1},\eta_{2}}\in \mathbb C[x_{0},\dotsc, x_{g-3}]$ 
is a cubic homogeneous
polynomial and $F_{\eta_{1},\eta_{2}}^\perp:=
\{ D\in \mathbb C[\partial_{0},\dotsc, 
\partial_{g-3}]\vert D(F_{\eta_{1},\eta_{2}})=0 \}$.

In this way, it remains defined a rational map: 
\begin{equation}\label{eq:ac} 
\alpha_C\colon\G(g-2,g)\dashrightarrow H_{g-3,3}
\end{equation}
where $\G(g-2,g)$ is the Grassmannian of $(g-3)$-planes in $\p^{g-1}$ and $H_{g-3,3}$ is the space of hypercubics 
in $\check\p^{g-3}$ modulo the action of $\PGL(g-2,\CC)$.

We will analyse this map when $C$ has a $g^1_n$, that is, it is an $n:1$ covering of 
the projective line. In particular, 
the above construction applied to the case of trigonal curves, \ie $n=3$, gives a
nice correspondence to the Fermat cubics in $g-2$ variables, and vice versa:

\begin{ta}\label{Theorem A}
A canonical curve $C$ is either trigonal or isomorphic 
to a smooth plane quintic 
if and only if $F_{\eta_{1},\eta_{2}}\in \mathbb C[x_{0},\dotsc, x_{g-3}]$ 
is a Fermat cubic, where 
$\eta_1,\eta_2\in H^{0}(C,\omega_{C})$ are general $1$-forms. 
\end{ta}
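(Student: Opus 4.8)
The plan is to translate the statement entirely into a question about the quadratic part of the apolar ideal. By construction and Macaulay's duality we have $A_2=(F_{\eta_1,\eta_2}^\perp)_2$, and since $A=\R_C/\gen{\eta_1,\eta_2}$, this space is exactly the image $(\bar\II_C)_2$ of the quadrics through the canonical curve under restriction to the $(g-3)$-plane $\Lambda:=\{\eta_1=\eta_2=0\}\cong\p^{g-3}$. The whole theorem therefore rests on the following algebraic pivot, which I would prove first: \emph{a cubic $F$ in the $g-2$ variables $x_0,\dots,x_{g-3}$ is a Fermat cubic if and only if the base scheme of $(F^\perp)_2$ consists of $g-2$ reduced points in linearly general position in $\p^{g-3}$.} Indeed, if the base locus is such a set of points I may choose coordinates so that it is the coordinate simplex $\{[e_0],\dots,[e_{g-3}]\}$; the space of quadrics vanishing on it has dimension $\binom{g-1}{2}-(g-2)=\binom{g-2}{2}$, which equals $\dim(F^\perp)_2$, so $(F^\perp)_2=\gen{\partial_i\partial_j:i\neq j}$. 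The relations $\partial_i\partial_j(F)=0$ for $i\neq j$ kill every mixed monomial, forcing $F=\sum_i c_ix_i^3$, and the Gorenstein condition (socle degree $3$, $\dim A_1=g-2$) forces every $c_i\neq0$; the converse is the trivial computation $\partial_i\partial_j(\sum_k x_k^3)=0$.

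For the direct implication I would invoke the geometry of the two Petri-exceptional families. If $C$ is trigonal, the quadrics through the canonical curve cut out the two-dimensional rational normal scroll $X\subset\p^{g-1}$ swept out by the lines spanned by the divisors of the $g^1_3$; the dimension count $\dim(\II_C)_2=\binom{g-2}{2}=\dim(\II_X)_2$ gives $V((\II_C)_2)=X$. Since $X$ has minimal degree $\deg X=g-2=\codim X+1$, a general codimension-two linear section $X\cap\Lambda$ consists of $\deg X=g-2$ reduced points in linearly general position in $\Lambda\cong\p^{g-3}$, and this set is precisely the base scheme of $(\bar\II_C)_2=(F^\perp)_2$. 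The pivot lemma then yields that $F_{\eta_1,\eta_2}$ is a Fermat cubic. The smooth plane quintic case ($g=6$) is identical with the scroll replaced by the Veronese surface $V\subset\p^5$: here $\dim(\II_C)_2=6=\dim(\II_V)_2$ forces $V((\II_C)_2)=V$, again a surface of minimal degree $4=g-2$, so $V\cap\Lambda$ is $g-2=4$ points in general position.

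For the converse I would argue by contraposition using Petri's theorem. If $C$ is neither trigonal nor a smooth plane quintic, then $\II_C$ is generated by quadrics, so $V((\II_C)_2)=C$ set-theoretically. Consequently the base scheme of $(\bar\II_C)_2$ equals $C\cap\Lambda$, and for general $\eta_1,\eta_2$ the plane $\Lambda$ has codimension two and so misses the curve: $C\cap\Lambda=\emptyset$. The apolar quadrics of a Fermat cubic, however, have non-empty base locus ($g-2\geq1$ points), so $F_{\eta_1,\eta_2}$ cannot be a Fermat cubic; this is exactly the contrapositive of the remaining implication.

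The main obstacle I expect is not the linear algebra but the two geometric inputs feeding the pivot lemma: proving that the quadrics through the canonical curve cut out \emph{exactly} the scroll (respectively the Veronese), and that a general codimension-two section of such a variety of minimal degree is a set of $g-2$ points in \emph{linearly general position}. The first is a dimension count combined with the classical description of the Petri-exceptional curves; the second is the standard fact that general linear sections of varieties of minimal degree are in general position, which I would phrase in the form needed (iterating through a rational normal curve section down to points), taking care that the genericity of $\eta_1,\eta_2$ guarantees transversality and the general-position property simultaneously.
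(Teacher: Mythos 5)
Your proposal is correct, and while it rests on the same two geometric pillars as the paper's proof---the Enriques--Petri theorem and the identification of $(F^\perp)_2$ with the quadrics through the canonical curve restricted to $\Lambda=\{\eta_1=\eta_2=0\}$---the algebraic mechanism you use on top of them is genuinely different in both directions. For the forward implication the paper takes $\Gamma:=X\cap\Lambda$ (the codimension-two section of the minimal-degree surface $X$, of length $g-2$), uses that $X$ is arithmetically Cohen--Macaulay to see that $(\II(X),\eta_1,\eta_2)$ is the ideal of $\Gamma$, and then concludes by the Apolarity Lemma that $f$ is a sum of $g-2$ cubes; you instead prove a standalone characterisation of Fermat cubics via the base locus of $(F^\perp)_2$, and the dimension count $\dim(F^\perp)_2=\binom{g-1}{2}-(g-2)=\binom{g-2}{2}=\dim\II(\Gamma)_2$ replaces both the aCM input and the Apolarity Lemma, at the price of having to check that $X\cap\Lambda$ is in linearly general position (which you correctly reduce to iterated hyperplane sections of a variety of minimal degree); a small bonus of your route is that it visibly produces $g-2$ \emph{independent} linear forms, a point the paper leaves implicit. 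For the converse the paper fixes coordinates, computes $f^\perp=(\de_i\de_j,\ \de_i^3-\de_j^3)$ for the Fermat cubic, and observes that this ideal needs cubic generators, so $\II(C)$ cannot be generated by quadrics; your contrapositive instead compares base loci ($C\cap\Lambda=\emptyset$ for general $\Lambda$ versus the nonempty base locus of a Fermat cubic's quadratic apolar system). Both converses rest on Petri's theorem, but the paper's version is strictly stronger---it shows that a Fermat image for even a \emph{single} pair $(\eta_1,\eta_2)$ already forces $C$ to be trigonal or a plane quintic---whereas yours only yields the statement for general $(\eta_1,\eta_2)$, which is all that Theorem A asserts. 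I see no gap: the only point worth making explicit in a write-up is that the linearly-general-position hypothesis in your pivot lemma already rules out linear forms in $F^\perp$, so that $\dim A_1=\dim A_2=g-2$ and the dimension count is legitimate for the cubic at hand.
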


Theorem A confirms the intuition that the more special is
the curve, the more special is the cubic; in other words, that the
image of the map $\alpha_C$ depends on the
geometry of $C$.

In general, our method to estimate the \emph{Waring number} of $F_{\eta_1,\eta_2}$ requires 
to find the degree of 
a surface $S$ such that $C\subset S\subset X$, where $X$ is a rational normal 
scroll of dimension $n-1$. 

For %general 
tetragonal curves we 
show that $S$ can be obtained as 
a rational surface such 
% an embedding into $\check\p^{g-1}$ of a blow-up of $\mathbb P^{2}$, and 
that the $g^{1}_{4}$ is
cut by a pencil of conics; 
more precisely,

\begin{tb}
If a canonical curve $C$ has a $g^1_4$, then 
$F_{\eta_{1},\eta_{2}}$ is a sum of 
cubes of at most $\lceil\frac{3}{2}g-\frac{7}{2}\rceil$  %$\frac{4}{3}g-\frac{5}{3}$ 
linear forms, where 
$\eta_1,\eta_2\in H^{0}(C,\omega_{C})$ are general. 
\end{tb}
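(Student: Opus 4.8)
The plan is to exploit the apolarity setup: the Waring number of the cubic $F_{\eta_1,\eta_2}$ is controlled by the geometry of the curve $C$ sitting inside a rational normal scroll. Concretely, a presentation of $F_{\eta_1,\eta_2}$ as a sum of $N$ cubes of linear forms corresponds, via apolarity, to a length-$N$ reduced subscheme of points apolar to $F_{\eta_1,\eta_2}$, equivalently (after the double quotient by $\eta_1,\eta_2$) to a configuration of points whose span cuts out the relevant ideal. So the strategy is to produce such a configuration geometrically by constructing an auxiliary surface $S$ with $C \subset S \subset X$, where $X$ is the $3$-dimensional rational normal scroll swept out by the trisecant (here quadrisecant) planes of the $g^1_4$.

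First I would recall that for a tetragonal canonical curve the $g^1_4$ determines a $3$-dimensional rational normal scroll $X \subset \mathbb P^{g-1}$ whose ruling planes cut the divisors of the pencil; this is the classical scroll appearing in the analysis of the canonical ideal for curves of Clifford index computed by a $g^1_4$. Then I would construct the surface $S$ as the rational surface traced inside $X$ by requiring that the $g^1_4$ be cut by a pencil of conics — that is, $S$ is a conic bundle over $\mathbb P^1$ sitting inside the scroll, sweeping out a surface that contains $C$. The heart of the matter is a degree computation: I would compute $\deg S$ in terms of $g$ using the scroll invariants and the numerical data of the conic pencil, expecting $\deg S$ to grow linearly in $g$ with slope $\tfrac32$. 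The two general hyperplane sections corresponding to $\eta_1 = \partial_{g-1}$, $\eta_2 = \partial_{g-2}$ then cut $S$ in a scheme of points apolar to $F_{\eta_1,\eta_2}$, and the Waring number is bounded by the length of this scheme, which I expect to be $\deg S - (\text{correction})$, yielding the bound $\lceil \tfrac32 g - \tfrac72 \rceil$.

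More precisely, I would argue that the general codimension-$2$ linear section $S \cap \{\eta_1 = \eta_2 = 0\}$ is a reduced finite set of points whose defining ideal, pushed into the Artinian Gorenstein ring $A = \mathbb C[\partial_0,\dots,\partial_{g-3}]/F_{\eta_1,\eta_2}^\perp$, is contained in $F_{\eta_1,\eta_2}^\perp$; by the apolarity lemma this exhibits $F_{\eta_1,\eta_2}$ as a sum of that many cubes. Thus the count of points in $S \cap \mathbb P^{g-3}$ is the crucial quantity, and the ceiling in the statement reflects a parity adjustment between $\deg X$, the conic-bundle structure of $S$, and the two-step quotient by the general forms $\eta_1,\eta_2$.

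The hard part will be constructing $S$ with both the right containment $C \subset S \subset X$ and the minimal possible degree, and then verifying that the resulting point configuration is genuinely apolar to $F_{\eta_1,\eta_2}$ rather than merely apolar to the full canonical cubic before quotienting. In particular, the interplay between the two general linear forms $\eta_1,\eta_2$ and the conic pencil on $S$ must be controlled: one must ensure the section $S \cap \{\eta_1 = \eta_2 = 0\}$ remains reduced and of the expected length for \emph{general} $\eta_1, \eta_2$, and that the scroll $X$ itself is smooth (or at worst mildly singular) so that the degree formula for $S$ holds. Establishing this genericity, together with the precise degree of $S$, is where the real work lies; everything else is an application of the apolarity dictionary already set up in the introduction.
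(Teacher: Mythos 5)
Your strategy coincides with the paper's: find a surface $S$ with $C\subset S\subset X$ ($X$ the three-dimensional scroll of degree $g-3$ from Maroni's theorem), bound $\deg S$, and conclude by cutting with $\eta_1,\eta_2$ and applying the Apolarity Lemma. But as written the proposal leaves all three substantive steps as expectations rather than arguments, and the missing ingredient in each case is identifiable. For the existence and the degree of $S$, the paper invokes Schreyer's structure theorem (\cite[Corollary 4.4]{Sch}): in the Chow ring of $X$ one has $[C]\sim 4H^2+2(5-g)HF$, and $C$ is the complete intersection of two irreducible divisors $Y_i\sim 2H-b_iF$ with $b_1+b_2=g-5$, whence $\deg Y_i=2g-6-b_i$. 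Your ``conic bundle cut out by the pencil of conics through the four points of each fibre'' is exactly what the $Y_i$ are (each restricts to a conic on each ruling plane), but without the resolution-theoretic input you have neither the existence of such a divisor containing $C$ nor the relation $b_1+b_2=g-5$ that produces the bound $\lceil\frac{3g-7}{2}\rceil$. Note also that the ceiling is not a ``parity adjustment between $\deg X$ and the quotient by $\eta_1,\eta_2$'': it arises because for $g$ even the surface of \emph{smaller} degree may fail to be rational (the $g^1_4$ can be composed with an elliptic or hyperelliptic involution), so one must take the other $Y_i$, of degree $\frac{3}{2}g-3$ rather than $\frac{3}{2}g-4$; see Remark~\ref{rmk:sch}.

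The apolarity step, which you correctly flag as ``where the real work lies,'' is not automatic either: the general linear section $\Gamma=V(\II(Y_2),\eta_1,\eta_2)$ has length exactly $\deg Y_2$ (no correction term), but the required containment $\II(\Gamma)\subset(\II(C),\eta_1,\eta_2)=F^\perp$ would be immediate only if $Y_2$ were arithmetically Cohen--Macaulay. The paper replaces this by Lemma~\ref{lem:acm}: using that $Y_2$ is the image of a regular rational surface under $\phi_{\abs{K_S+C}}$ and that $H^1(S,K_S+C)=0$, one shows $H^0(\II_{\Gamma/S}(j))=\phi^*(\eta_1)H^0(S,(j-1)(K_S+C))+\phi^*(\eta_2)H^0(S,(j-1)(K_S+C))$ for $j=2,3$, which suffices since $F^\perp$ is generated in degrees $2$ and $3$. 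Without this (or an aCM statement for $Y_2$), the points of $\Gamma$ are not known to be apolar to $F_{\eta_1,\eta_2}$, so the proposal as it stands does not close.
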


We show that the above bound is sharp for small genera and it is always better than the 
one in \cite{IR}; we analyse in particular the case of genus $7$ (Subsection~\ref{g7}), showing that, for 
the general tetragonal curve $[C]\in \mathcal{T}_7$ 
($\mathcal{T}_7 \subset\mathcal M_7$ 
is the \emph{tetragonal locus}, which is irreducible 
of dimension $17$), the corresponding $F_{\eta_1,\eta_2}$ is the sum of exactly 
$7$ cubes, and, for $[C]\in \mathcal{T}_7^{(3)}$ 
($\mathcal{T}_7^{(3)}\subset \mathcal{T}_7$  
is the locus of the curves  carrying exactly $3$ linear series $g^1_4$'s; it has dimension $16$), 
$F_{\eta_1,\eta_2}$ is the sum of exactly 
$6$ cubes, in Proposition~\ref{prop:7}. 

Then, in Subsection~\ref{ssec:stc}, 
we have extended the method we used in Subsection~\ref{g7} to some classes of tetragonal curves contained in 
\emph{balanced} rational normal scrolls, see 
Propositions~\ref{4gon}, \ref{4gon-1}, and \ref{4gon-2}, finding---for these curves---better estimates 
than the one in Theorem~B. We have been able to extend the
method to produce surfaces of the desired degree also for curves contained in non-balanced scrolls, 
see Proposition~\ref{specis}; 
but the essential result given in Lemma~\ref{lem:acm} is
not easy to generalise to the obtained surfaces.

Then, in Subsection~\ref{sec:agg}, we show that this construction can be
extended to a class of special curves of every gonality thanks to a referee's suggestion.
Nevertheless the degree of such surfaces is, in general,
rather high---at least higher than the one of 
Iliev and Ranestad (or of Ciliberto and Harris \cite{ch}).

We could not find a generalisation of Theorem~B to higher gonality, since in the proof of it  
we used the fact that the rational normal scroll $X$ which contains the tetragonal 
$C$ has dimension three, and therefore the surface $S$ such that $C\subset S\subset X$ gives a divisor in $X$, 
while for higher gonality the rational normal scroll $X$ 
has higher dimension (more precisely, $\dim(X)=n-1$ for the $n$-gonal 
curve), and therefore $S$ has higher codimension.  

Moreover, we observe that the obstruction to obtain 
the vice versa of Theorem B (or even for the class of curves that we have 
studied) is 
that the geometry of surfaces in $\mathbb P^{g-1}$ of degree $> g$ is
not well understood.

% On the other hand, in Theorem~\ref{rmk:2}
% we have proven that for $g\gg 0$, the degree of 
% a surface containing $C$ tends to $g$. 
% If we could apply Theorem~\ref{rmk:2} to the 
% Waring problem of the associated hypercubics, it should imply that its
% Waring number tends to $g$. The obstruction is that for these surfaces, 
% we cannot apply safely the Apolarity Lemma 
% (see Lemma~\ref{lem:apo}) to the codimension two general linear sections. 
% In fact a kind of Lemma~\ref{lem:acm} is not available at the moment.

We think that the following problem has its own interest: 
\begin{pb}
Find a bound for the Waring number of the polar hypercubic associated to a general $n$-gonal curve. 
\end{pb}

\begin{acks}
We would like to thank K. Ranestad, E. Mezzetti, G. Sacchiero and M. Brundu for interesting discussions and suggestions, 
G. Casnati and E. Ballico for the help to improve our work and
for pointing out some inaccuracies and valuable comments.  
We would like also to thank the anonymous referee for important remarks and advices. 
\end{acks}

\section{Apolarity and hypercubics}

%%% %%%

%%% %%%

\subsection{Apolarity}
Let $S:=\CC[x_0,\dotsc, x_N]$ be the polynomial ring in $(N+1)$-variables. 
The algebra of the partial derivatives on $S$, 
\begin{equation*}
T:=\CC[\de_0,\dotsc,\de_N],\qquad \de_i:=\frac{\de}{\de_{x_i}},
\end{equation*}
acts on the monomials in the following way
\begin{equation*}
\de^a \cdot x^b=
\begin{cases}a!\binom{b}{a}x^{b-a} & \text{if $b\ge a$}\\
0 & \text{otherwise}
\end{cases}
\end{equation*}
where $a,b$ are multiindices $\binom{b}{a}=\prod_i\binom{a_i}{b_i}$, etc. 

Obviously, we can think of $S$ as the algebra of partial derivatives on $T$ 
by defining
\begin{equation*}
x^a \cdot \de^b=
\begin{cases}a!\binom{b}{a}\de^{b-a} & \text{if $b\ge a$}\\
0 & \text{otherwise.}
\end{cases}
\end{equation*}

These actions define a perfect paring between the 
homogeneous 
forms in degree $d$ in $S$ 
and $T$:  
\begin{equation*}
S_d\times T_d\xrightarrow{\cdot} \CC.
\end{equation*}
Indeed, this is nothing but the extension of the duality between vector spaces: 
if $V:=S_1$, then $T_1=V^*$. 

Moreover, the perfect 
paring shows the natural duality between $\p^N:=\proj(S)$ and 
$\check\p^N=\proj(T)$. 
More precisely, if $c=: (c_0,\dotsc, c_N)\in\check\p^N$, this gives 
$f_c:=\sum_i c_i x_i\in S_1$, and if $D\in T_a$, 
\begin{equation*}
D \cdot f_c^b=
\begin{cases}a!\binom{b}{a}D(c)f_c^{b-a} & \text{if $b\ge a$}\\
0 & \text{otherwise}.
\end{cases}
\end{equation*}
in particular, if $b\ge a$
\begin{equation*}\label{eq:lin}
0=D \cdot f_c^b \iff D(c)=0.
\end{equation*}

\begin{df}
We say that two forms, $f\in S$ and $g\in T$ are \emph{apolar} if 
\begin{equation*}
g \cdot f =f\cdot g =0.
\end{equation*}
\end{df}

Let $f\in S_d$ and $F:=V(f)\subset\p^N$ the corresponding hypersurface; 
let us now define 
\begin{equation*}
F^\perp :=\{D\in T\mid D\cdot f=0 \}
\end{equation*}
and
\begin{equation*}
A^F:=\frac{T}{F^\perp}.
\end{equation*}

\begin{lem}
The ring $A^F$ is Artinian Gorenstein of socle 
of 
dimension one and degree $d$. 
\end{lem}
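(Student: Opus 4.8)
The plan is to exploit the graded structure throughout. First I would observe that $F^\perp$ is a \emph{homogeneous} ideal of $T$: writing $D=\sum_k D_k$ with $D_k\in T_k$, the pieces $D_k\cdot f$ lie in distinct summands $S_{d-k}$ of $S$, so $D\cdot f=0$ is equivalent to $D_k\cdot f=0$ for every $k$, whence $F^\perp$ is homogeneous; and since differential operators compose, $(ED)\cdot f=E\cdot(D\cdot f)$, so $D\cdot f=0$ forces $(ED)\cdot f=0$ and $F^\perp$ is indeed an ideal. Consequently $A^F$ inherits a grading $A^F=\bigoplus_k A^F_k$ with $A^F_k=T_k/(F^\perp)_k$, and $A^F_0=\CC$ because a nonzero constant does not annihilate $f$.

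Next I would show $A^F$ is Artinian. As $f\in S_d$, any operator of order $>d$ kills $f$, so $T_k\subseteq F^\perp$ and $A^F_k=0$ for all $k>d$. Since each $T_k$ is finite-dimensional, $A^F$ is a finite-dimensional graded $\CC$-algebra, hence Artinian.

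Then I would pin down the top graded piece. The perfect pairing $S_d\times T_d\to\CC$, $(g,D)\mapsto D\cdot g$, makes the functional $D\mapsto D\cdot f$ on $T_d$ nonzero (because $f\neq 0$), and its kernel is exactly $(F^\perp)_d$, of codimension one. Therefore $A^F_d\cong\CC$ is one-dimensional; this will turn out to be the socle.

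The crux, and the step I expect to require the most care, is to identify the socle $\operatorname{soc}(A^F)=\{a\in A^F\mid \mathfrak{m}\,a=0\}$, where $\mathfrak{m}=\gen{\de_0,\dotsc,\de_N}$. The inclusion $A^F_d\subseteq\operatorname{soc}(A^F)$ is immediate since $A^F_{d+1}=0$. For the reverse, suppose $\bar D\in A^F_k$ is a nonzero socle element with $k<d$, so that $g:=D\cdot f\neq 0$ has positive degree $d-k$ in $S$, while $\de_i D\in F^\perp$ for all $i$. The composition rule gives $0=(\de_i D)\cdot f=\de_i\cdot(D\cdot f)=\frac{\de g}{\de x_i}$ for every $i$, which forces $g$ to be constant and hence zero, a contradiction. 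Thus the socle is concentrated in degree $d$ and is one-dimensional. Since one-dimensionality of the socle is precisely the Gorenstein condition for an Artinian graded algebra with residue field $\CC$, this shows $A^F$ is Artinian Gorenstein with socle of dimension one and (socle) degree $d$.
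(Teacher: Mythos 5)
Your proof is correct. Note that the paper itself offers no argument for this lemma---it simply cites Iarrobino--Kanev \cite{ik}---so your write-up is ``different'' only in the sense that it is self-contained; the content is essentially the standard Macaulay-duality argument. The structure is sound: homogeneity of $F^\perp$ via the graded pieces of $D\cdot f$, Artinianness from $T_k\subseteq F^\perp$ for $k>d$, one-dimensionality of $A^F_d$ from the perfect pairing, and the socle computation, where the key step is that a homogeneous form of positive degree all of whose first partials vanish must be zero (an Euler-formula fact that uses characteristic $0$, which is where your argument would break down in positive characteristic). Two small points you pass over silently, neither of which is a real gap: the socle of a graded Artinian algebra is a graded submodule, so it is legitimate to test only homogeneous elements $\bar D\in A^F_k$ as you do; and the case $k=0$ is subsumed in your contradiction argument by taking $D$ a nonzero constant, so that $g=D\cdot f$ is a nonzero form of degree $d>0$. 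The benefit of your route over the paper's citation is transparency---in particular it isolates exactly where the hypothesis $f\neq 0$ and the characteristic-zero assumption enter.
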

\begin{proof}
See \cite[\S 2.3 page 67]{ik}.
\end{proof}

\begin{df}
$A^F$ is called the \emph{apolar} Artinian Gorenstein ring of $F$.
\end{df}

It holds the \emph{Macaulay Lemma}, that is 
\begin{lem}
The map 
\begin{equation*}
F\mapsto A^F
\end{equation*}
is a bijection between the hypersurfaces $F\subset\p^N$ of degree 
$d$ and graded Artinian Gorenstein quotient rings 
\begin{equation*}
A:=\frac{T}{I}
\end{equation*}
of $T$ with socledegree $d$. 
\end{lem}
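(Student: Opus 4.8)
The plan is to prove the two directions of the bijection separately, isolating the one point where the Gorenstein hypothesis is genuinely needed. Well-definedness is already available: by the preceding lemma, for every degree-$d$ hypersurface $F=V(f)$ the quotient $A^F=T/F^\perp$ is graded Artinian Gorenstein with socle degree $d$, and since $D\cdot(\lambda f)=\lambda(D\cdot f)$ the ideal $F^\perp$ depends only on $F$ and not on the chosen defining form $f\in S_d$. Hence $F\mapsto A^F$ is a well-defined map into the target set of rings, and it remains to show it is injective and surjective.

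For injectivity I would work only in top degree. If $A^F=A^G$, i.e. $F^\perp=G^\perp$ with $F=V(f)$, $G=V(g)$ and $f,g\in S_d$, then in particular $(f^\perp)_d=(g^\perp)_d$. Now $(f^\perp)_d=\{D\in T_d\mid D\cdot f=0\}$ is the kernel of the functional $T_d\to\CC$, $D\mapsto D\cdot f$; under the perfect pairing $S_d\times T_d\to\CC$ this kernel is exactly the annihilator of the line $\CC f\subset S_d\cong T_d^{*}$. A hyperplane of $T_d$ determines its annihilating line uniquely, so $f$ and $g$ are proportional and $F=G$.

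For surjectivity I would reconstruct $f$ from $I$ and then check equality degree by degree. Given a graded Artinian Gorenstein $A=T/I$ of socle degree $d$, its socle is one-dimensional and concentrated in degree $d$, so $\dim_\CC A_d=1$ and $I_d\subset T_d$ is a hyperplane; using $S_d\cong T_d^{*}$ I choose $f\in S_d$ spanning the line annihilated by $I_d$, so that $(f^\perp)_d=I_d$ by construction. It then remains to prove $(f^\perp)_k=I_k$ for all $k$. For $k>d$ both sides equal $T_k$ (the differentiation rule kills $f$, and $A_k=0$). For $k\le d$ the crucial point is a double use of perfectness: for $D\in T_k$ one has $D\cdot f=0$ in $S_{d-k}$ if and only if $(ED)\cdot f=E\cdot(D\cdot f)=0$ for all $E\in T_{d-k}$, i.e. $D\in(f^\perp)_k$ iff $ED\in(f^\perp)_d=I_d$ for every $E\in T_{d-k}$. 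On the other hand the Gorenstein property is precisely the nondegeneracy of the multiplication pairing $A_k\times A_{d-k}\to A_d\cong\CC$, which says $D\in I_k$ iff $ED\in I_d$ for every $E\in T_{d-k}$. The two conditions coincide, hence $(f^\perp)_k=I_k$, so $I=f^\perp$ and $A=A^F$ with $F=V(f)$.

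I expect the surjectivity step to be the main obstacle, and within it the passage from the single top-degree equality $(f^\perp)_d=I_d$ to equality in all lower degrees: this is exactly the place where the Gorenstein hypothesis must be invoked, matching the nondegeneracy of the multiplication pairing on $A$ against the nondegeneracy of the apolarity pairing on $S\times T$. Everything else is formal bookkeeping with these two perfect pairings.
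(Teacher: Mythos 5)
The paper does not actually prove this lemma: it is the Macaulay correspondence and the text simply cites \cite[Lemma 2.12]{ik}. Your argument is a correct, self-contained proof and follows what is essentially the standard route (also the one in \cite{ik}): injectivity from the top-degree pairing $S_d\times T_d\to\CC$, and surjectivity by reconstructing $f$ as the annihilator of the hyperplane $I_d\subset T_d$ and then propagating the equality $(f^\perp)_d=I_d$ down to all degrees $k\le d$ by playing the perfect apolarity pairing $S_{d-k}\times T_{d-k}\to\CC$ against the perfect multiplication pairing $A_k\times A_{d-k}\to A_d$. You have correctly located the only place where the Gorenstein hypothesis enters. The one ingredient you invoke without proof is that ``graded Artinian Gorenstein with socle degree $d$'' is equivalent to nondegeneracy of the multiplication pairing $A_k\times A_{d-k}\to A_d\cong\CC$; this is standard, but if you want the proof fully closed you should note that any nonzero ideal of an Artinian local ring meets the socle, which (the socle being one-dimensional and concentrated in degree $d$) gives nondegeneracy in one argument, and then a dimension count gives $\dim A_k=\dim A_{d-k}$ and perfectness. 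With that remark included, your proof is complete and is a reasonable substitute for the citation.
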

\begin{proof}
See \cite[Lemma 2.12 page 67]{ik}.
\end{proof}

\subsubsection{Varieties of sum of powers}

Consider a hypersurface $F=V(f)\subset\p^N$ of degree $d$. 
\begin{df}
A subscheme $\Gamma\subset\check\p^N$ is said to be \emph{apolar to $F$} if
\begin{equation*}
\II(\Gamma)\subset F^\perp.
\end{equation*}
\end{df}

It holds the \emph{Apolarity Lemma}:

\begin{lem}\label{lem:apo}
Let us consider the linear forms $\ell_1,\dotsc,\ell_s\in S_1$ and 
let us denote by 
$L_1,\dotsc,L_s\in\check\p^N$ the corresponding points in the dual space. 
Then 
\begin{equation*}
\Gamma\ \textup{is apolar to}\ 
F 
= 
V(f),
\iff \exists \lambda_1,\dotsc,\lambda_s \in \CC^*\ \textup{such that}\ f=\lambda_1\ell_1^d+\dotsc+\lambda_s\ell_s^d
\end{equation*}
where $\Gamma:=\{L_1,\dotsc,L_s\}\subset \check\p^N$. If $s$ is minimal, then it is called the 
\emph{Waring number} of $F$. 
\end{lem}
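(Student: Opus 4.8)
The plan is to prove the Apolarity Lemma by unwinding the definitions of both sides and translating the geometric condition $\II(\Gamma)\subset F^\perp$ into the algebraic statement about a power-sum decomposition. The two directions are genuinely different in character: the forward implication is essentially a membership/intersection computation, while the reverse implication requires an argument that the ideal of a finite point set, when it sits inside $F^\perp$, forces the decomposition to exist.

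First I would set up the reverse implication, since it is the more transparent one. Suppose $f=\lambda_1\ell_1^d+\dotsc+\lambda_s\ell_s^d$ with $\ell_i=f_{c_i}$ for points $L_i=c_i\in\check\p^N$, and let $\Gamma=\{L_1,\dotsc,L_s\}$. I would take any $D\in\II(\Gamma)_a$, a homogeneous form of degree $a$ vanishing on all the $L_i$. The key computational input is already recorded in the excerpt: for $D\in T_a$ and $b\ge a$ one has $D\cdot f_{c}^{\,b}=a!\binom{b}{a}D(c)f_c^{\,b-a}$, so $D(c_i)=0$ forces $D\cdot\ell_i^d=0$ for each $i$. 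By linearity $D\cdot f=\sum_i\lambda_i\,D\cdot\ell_i^d=0$, hence $D\in F^\perp$. For $a>d$ the action is zero for degree reasons, and the degree-zero and low-degree graded pieces are handled directly, so $\II(\Gamma)\subset F^\perp$, which is exactly the statement that $\Gamma$ is apolar to $F$.

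For the forward implication I would argue contrapositively or constructively: assuming $\Gamma=\{L_1,\dotsc,L_s\}$ is apolar to $F$, so $\II(\Gamma)\subset F^\perp$, I want to produce the scalars $\lambda_i$. The clean way is to pass to the apolar ring $A^F=T/F^\perp$ and use that $\II(\Gamma)\subset F^\perp$ induces a surjection $T/\II(\Gamma)\twoheadrightarrow A^F$; dually, one gets an inclusion of the degree-$d$ graded pieces of the corresponding coordinate spaces. Concretely, the condition $\II(\Gamma)\subset F^\perp$ means $f$ is apolar to every form vanishing on $\Gamma$, and by the perfect pairing $S_d\times T_d\to\CC$ this places $f$ in the subspace of $S_d$ annihilated by $\II(\Gamma)_d$. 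That annihilator is precisely the linear span of the powers $\ell_1^d,\dotsc,\ell_s^d$: one inclusion is the computation from the reverse direction, and the reverse containment follows because the powers $\ell_i^d$ (via the evaluation-type pairing) are exactly the forms in $S_d$ not killed by the functions on $\Gamma$. Therefore $f$ lies in $\gen{\ell_1^d,\dotsc,\ell_s^d}$, giving the $\lambda_i$.

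The main obstacle is the identification that the annihilator of $\II(\Gamma)_d$ inside $S_d$ is exactly the span of the $\ell_i^d$, and not something strictly larger. This is where one needs the perfect pairing together with the fact that evaluation at the points of $\Gamma$ separates the relevant forms; the powers $\ell_i^d$ are the image of $\Gamma$ under the $d$-uple Veronese, and the claim amounts to the statement that a $d$-form apolar to all of $\II(\Gamma)$ must lie in the linear span of that Veronese image. I would pin this down by a dimension count comparing $\dim(\II(\Gamma)_d)^{\perp}$ in $S_d$ with $\dim\gen{\ell_1^d,\dotsc,\ell_s^d}$, using that both are controlled by the Hilbert function of $\Gamma$ in degree $d$; since this is the standard content of the Apolarity Lemma (\cite{ik}), I would cite the reference for the precise dimension statement rather than reprove it. The minimality clause defining the Waring number then needs no further argument, being simply the definition of the smallest such $s$.
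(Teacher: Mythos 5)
Your proposal is correct in substance, but it is worth noting that the paper does not actually prove this lemma at all: its ``proof'' is the single line ``See \cite[Lemma 1.15 page 12]{ik}'', so any comparison is really between your argument and the standard one in Iarrobino--Kanev. Your route is the standard one and it is sound: the reverse implication is the direct computation $D\cdot\ell_i^d=a!\binom{d}{a}D(c_i)\ell_i^{d-a}=0$ for $D\in\II(\Gamma)_a$, and the forward implication reduces, via the perfect pairing $S_d\times T_d\to\CC$, to the identification $(\II(\Gamma)_d)^{\perp}=\gen{\ell_1^d,\dotsc,\ell_s^d}$. Your dimension count for that identification is exactly the right one and can in fact be closed without citation: one containment is the computation just mentioned, and for equality note that the kernel of the pairing of $T_d$ against $\gen{\ell_1^d,\dotsc,\ell_s^d}$ is precisely $\{D\in T_d\mid D(c_i)=0\ \forall i\}=\II(\Gamma)_d$ (since $\Gamma$ is reduced), so $\dim\gen{\ell_1^d,\dotsc,\ell_s^d}=\dim T_d-\dim\II(\Gamma)_d=\dim(\II(\Gamma)_d)^{\perp}$. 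So you gain a self-contained argument where the paper offers only a pointer. One small caveat that neither you nor the statement addresses: the forward direction only yields $f\in\gen{\ell_1^d,\dotsc,\ell_s^d}$, i.e.\ coefficients $\lambda_i\in\CC$ possibly zero (e.g.\ $\Gamma=\{L_1,L_2\}$ with $f=\ell_1^d$ is apolar to $F$ but admits no expression with both $\lambda_i\neq 0$ when $\ell_1^d,\ell_2^d$ are independent); the clause $\lambda_i\in\CC^*$ in the statement is the usual harmless imprecision, resolved by passing to the subset of $\Gamma$ where the coefficients are nonzero, but a complete write-up should say so.
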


\begin{proof}
See \cite[Lemma 1.15 page 12]{ik}.
\end{proof}

By this lemma, it is natural to define the \emph{variety of apolar subschemes}
\begin{equation*}
\VPS(F,s):=\overline{\{\Gamma\in\Hilb_s(\check\p^N)\mid 
\II(\Gamma)\subset F^\perp \}},
\end{equation*}
where $\Hilb_s(\check\p^N)$ is the Hilbert scheme of length $s$ 
zero-dimensional subschemes in $\check\p^N$.

%%%%%%%

\subsection{Hypercubics and canonical sections}
Let $C\subset\p(H^0(\omega_C)^*)=\check\p^{g-1}$ be a canonical curve. 
It is a well-known fact that $C$ is \emph{arithmetically Gorenstein} 
\ie its homogeneous coordinate ring, $\R_C$, is Gorenstein. 
Therefore, if we take two general linear forms 
$\eta_1,\eta_2\in ({\R_C})_1=H^0(\omega_C)$, then 
\begin{align*}
T:&= \frac{\R_C}{\gen{\eta_1,\eta_2}}\\
&= \Sym\left(\frac{H^0(\omega_C)}{\gen{\eta_1,\eta_2}}\right)^*
\end{align*}
is Artinian Gorenstein, and its values of the Hilbert function are 
$1, g-2, g-2, 1$. Therefore, the socledegree of $T$ is 
$3$, and by the Macaulay Lemma, this defines a hypercubic in 
$\proj(S)=\p\left(
\frac{ H^0(\omega_C)}{\gen{\eta_1,\eta_2}}\right)$, $S:=T^*$. 

Thus, we have the rational map 
$\alpha_C\colon\G(g-2,g)\dashrightarrow H_{g-3,3}$ introduced in \eqref{eq:ac}.

%%%%%%%%%%%%%%

%%%%%%%%%%%%%%

\subsubsection{Gonality}

In the following sections we will study the image of the map 
$\alpha_C$.
We will show that this 
is related to study the gonality of $C$.

\section{Trigonal curves} 

In this section we will prove Theorem A:

\begin{thm}
Let $C\subset\p(H^0(\omega_C)^*)$ be a canonical curve. 
Then $C$ is trigonal or isomorphic to a smooth plane quintic 
if and only if for general $\eta_1,\eta_2\in H^0(\omega_C)$, 
the image of the map 
$f=
\alpha_C\left(\frac{H^0(\omega_C)}{\gen{\eta_1,\eta_2}}\right)^*$, 
defined in \eqref{eq:ac}, is a \emph{Fermat cubic}, 
\ie it is the sum of $g-2$ cubes. 
\end{thm}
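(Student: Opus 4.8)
The plan is to translate the Fermat condition into a statement about the base locus of the quadratic part of the apolar ideal $F^\perp$, and then to read off that base locus from the classical geometry of the quadrics through the canonical curve. Recall from the Hilbert function $1,g-2,g-2,1$ of $A=\R_C/\gen{\eta_1,\eta_2}=T/F^\perp$ that $S_1$ is $(g-2)$-dimensional, so expressing $f$ as a sum of $g-2$ cubes forces the associated linear forms $\ell_1,\dots,\ell_{g-2}$ to be linearly independent; equivalently, the apolar set $\Gamma=\{L_1,\dots,L_{g-2}\}\subset\check\p^{g-3}$ must consist of $g-2$ points in linearly general position. By the Apolarity Lemma~\ref{lem:apo}, $f$ is a Fermat cubic precisely when such a $\Gamma$ satisfies $\II(\Gamma)\subseteq F^\perp$. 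Since the ideal of $g-2$ points in linearly general position is generated by the $\binom{g-2}{2}$ quadrics of the coordinate simplex, and since $\dim(F^\perp)_2=\binom{g-1}{2}-(g-2)=\binom{g-2}{2}$, the inclusion $\II(\Gamma)_2\subseteq(F^\perp)_2$ must be an equality. This reduces everything to the criterion: $f$ \emph{is a Fermat cubic if and only if the base locus of $(F^\perp)_2$ consists of $g-2$ points in linearly general position}. (Points in special position would impose fewer conditions, enlarging $\II(\Gamma)_2$ past dimension $\binom{g-2}{2}$, so linear general position is exactly what is needed.)

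Next I would identify $(F^\perp)_2$ geometrically. The ideal $F^\perp$ is the image of the canonical ideal $\II_C$ under restriction to the codimension-two linear space $\Lambda:=\{\eta_1=\eta_2=0\}\cong\check\p^{g-3}$, since $A$ is the coordinate ring of $C$ cut by $\Lambda$. As $\II_C$ contains no forms of degree $\le 1$, its degree-two part restricts onto $(F^\perp)_2$, and because $\dim(\II_C)_2=\binom{g-2}{2}=\dim(F^\perp)_2$ this restriction is an isomorphism for general $\eta_1,\eta_2$. Therefore the base locus of $(F^\perp)_2$ equals $\Lambda\cap Y$, where $Y\subseteq\check\p^{g-1}$ is the intersection of all quadrics through $C$.

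Then I would invoke the Enriques--Babbage--Petri classification of the quadrics through a canonical curve~\cite{ACGH}. If $C$ is neither trigonal nor a smooth plane quintic, then $Y=C$, so $\Lambda\cap Y=\Lambda\cap C=\emptyset$ for general $\Lambda$ (a curve misses a general codimension-two linear space); hence $(F^\perp)_2$ is base-point-free and, by the criterion of the first paragraph, $f$ is \emph{not} a Fermat cubic. If $C$ is trigonal, then $Y$ is the two-dimensional rational normal scroll of minimal degree $g-2$, while if $C$ is a smooth plane quintic ($g=6$) then $Y$ is the Veronese surface, again of degree $g-2$. In both cases $Y$ is a surface of minimal degree, so the general codimension-two section $\Lambda\cap Y$ is exactly $\deg Y=g-2$ points. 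Since the two alternatives are exhaustive and mutually exclusive, combining these with the criterion yields the claimed equivalence.

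The hard part will be the assertion in the last step that the $g-2$ points $\Lambda\cap Y$ lie in \emph{linearly general position}, and not merely that they are distinct and span: it is this refinement that forces $\II(\Gamma)_2=(F^\perp)_2$ in the reduction, and it rests precisely on $Y$ being of minimal degree, i.e. $\deg Y=\codim Y+1$, for which a general linear section is in linearly general position. I would also need to make the genericity statements precise—namely that for general $\eta_1,\eta_2$ the section $\Lambda$ is general enough that the Hilbert function of $A$ is $1,g-2,g-2,1$, that $\Lambda\cap C=\emptyset$ in the non-special case, and that $\Lambda\cap Y$ is reduced of the expected dimension in the trigonal and plane-quintic cases.
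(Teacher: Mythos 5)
Your argument is correct, and in the forward direction it is essentially the paper's: both proofs invoke Enriques--Petri to produce the surface $Y$ of minimal degree $g-2$ cut out by $\II(C)_2$ and slice it with the general codimension-two space $\Lambda=\{\eta_1=\eta_2=0\}$ to obtain $g-2$ apolar points. The only difference there is how $\II(\Gamma)\subset F^\perp$ is justified: the paper uses the arithmetic Cohen--Macaulayness of $Y$ (so that $(\II(Y),\eta_1,\eta_2)$ is already the full ideal of the $g-2$ points), whereas you use linear general position together with the dimension count $\dim\II(\Gamma)_2=\binom{g-2}{2}=\dim(F^\perp)_2$ and generation of $\II(\Gamma)$ in degree $2$; these are interchangeable. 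The converse is where you genuinely diverge. The paper computes $f^\perp=(\de_i\de_j,\,\de_i^3-\de_j^3)$ for the Fermat cubic, observes that this ideal is not generated by its quadrics, deduces that $\II(C)$ (whose quadrics are lifts $\de_i\de_j+\de_{g-2}L_{i,j}+\de_{g-1}M_{i,j}$) is likewise not quadratically generated, and concludes by Enriques--Petri; notably this works for a \emph{single} pair $(\eta_1,\eta_2)$, not just a general one. You instead take the contrapositive through base loci: if $C$ is neither trigonal nor a plane quintic, its quadrics cut out $C$ itself, a general $\Lambda$ misses the curve, so $(F^\perp)_2$ is base-point-free and $f$ cannot be Fermat. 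This requires the genericity of $\eta_1,\eta_2$ (which the statement grants) but has the merit of running both directions through the single geometric criterion ``base locus of $(F^\perp)_2$ equals $\Lambda\cap Y$''. The two facts you flag as needing care --- that a general codimension-two linear section of a surface of minimal degree consists of $\deg Y$ points in linearly general position, and that $\II(C)_2\to(F^\perp)_2$ is an isomorphism for general $\eta_1,\eta_2$ --- are indeed the right ones, and both are standard; I see no gap.
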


\begin{proof}
Assume that $C$ is trigonal or isomorphic to a smooth plane quintic. 
Then $\II(C)$ is not generated by 
quadrics by the Enriques-Petri Theorem (see for instance \cite{ACGH}). 
In particular---again by Enriques-Petri---the quadrics determine a surface $S$, 
which is a rational normal scroll 
(or the Veronese surface, in the case of the plane quintic). 
Then, $\II(S)\subset\II(C)$ and $\II(S)_2=\II(C)_2$. 
This implies that the ideal $\II:=(\II(S),\eta_1,\eta_2)$ gives a 
zero-dimensional scheme $\Gamma$ 
of length the degree of $S$ and $\II(\Gamma)=\II$ since $S$ is arithmetically Cohen-Macaulay. Since $S$ is a surface of 
minimal degree in $\check \p^{g-1}$, then $\deg(S)=g-2$. 

By hypothesis, $\eta_1,\eta_2$
are general, then $\II$ gives $g-2$ points in $\check \p^{g-3}$, and by Lemma~\ref{lem:apo}, 
these determine $g-2$ linear forms in $\p^{g-3}$, 
$\ell_1,\dotsc,\ell_{g-2}$ 
such that
$f=\lambda_1\ell_1^3+\dotsc+\lambda_{g-2}\ell_s^3$.

Conversely, if the image of $\alpha_C$ is a Fermat cubic for 
a particular choice of $\eta_1,\eta_2\in H^0(\omega_C)$, 
\ie for 
\begin{equation*}
\left(\frac{H^0(\omega_C)}{\gen{\eta_1,\eta_2}}\right)^*\in \G(g-2,g),
\end{equation*} 
we can fix coordinates
$(\de_0,\dotsc,\de_{g-3})$ on 
$\check\p^{g-3}=\proj(T)
=\p\left(\frac{H^0(\omega_C)}{\gen{\eta_1,\eta_2}}\right)^*$ 
and so coordinates 
$(x_0,\dotsc,x_{g-3})$ on $\p^{g-3}=\proj(S)
=\p\left(\frac{H^0(\omega_C)}{\gen{\eta_1,\eta_2}}\right)$; therefore let us 
suppose that 
\begin{equation*}\label{eq:F}
\alpha_C\left(\frac{H^0(\omega_C)}{\gen{\eta_1,\eta_2}}\right)^*=
[x_0^3+\dotsb+x_{g-3}^3]. 
\end{equation*}
We can also suppose that the coordinates on the projective space $\p(H^0(\omega_C)^*)$ are 
$(\de_0,\dotsc,\de_{g-3},\de_{g-2},\de_{g-1})$, \ie we can think of $\eta_1$ and 
$\eta_2$ as the hyperplanes $\{\de_{g-2}=0\}$ and $\{\de_{g-1}=0\}$, 
respectively.

Letting $f:=x_0^3+\dotsb+x_{g-3}^3$, we only need to find $f^\perp$. 
It is easy to see that 
\begin{equation*}
f^\perp=(\de_i\de_j, \de_i^3-\de_j^3),
\quad i,j\in\{0,\dotsc,g-3\},\quad i\neq j. 
\end{equation*}
Then, the quadrics of $\II(C)$ are of the form 
\begin{equation*}
Q_{i,j}:=\de_i\de_j+\de_{g-2}L_{i,j}+\de_{g-1}M_{i,j}, 
\end{equation*}
where $L_{i,j}$ and $M_{i,j}$ are linear forms on $\check\p^{g-1}$. 
By the Enriques-Petri Theorem, 
$(Q_{i,j})\subsetneq \II(C)$ if and 
only if $C$ is trigonal or isomorphic to a smooth plane quintic. Now, 
$(\de_i\de_j)\subsetneq f^\perp$, since for example 
$\de_0^3-\de_1^3$ is not contained 
in the ideal $(\de_0\de_1)$,
so $(Q_{i,j})\subsetneq \II(C)$: in fact, if it were $(Q_{i,j})= \II(C)$ this would imply 
$(\de_i\de_j)= f^\perp$.

We have just proven that, if for a particular choice of $\eta_1,\eta_2$ the 
image of $\alpha_C$ is a Fermat cubic, then $C$ is trigonal or isomorphic to 
the plane quintic, while we have seen before that if $C$ is trigonal or isomorphic to 
the plane quintic for general $\eta_1,\eta_2$ the image of 
$\alpha_C$ is a Fermat cubic. Then, if $\eta_1,\eta_2$ are general, $f=
\alpha_C\left(\frac{H^0(\omega_C)}{\gen{\eta_1,\eta_2}}\right)^*$, is a Fermat cubic. 
\end{proof}

%%%%%%%

\section{The tetragonal case}

In order to analyse $F_{\eta_{1},\eta_{2}}$ when $C$ is an $n$-gonal curve, 
we recall some basic well-known facts about rational 
normal scrolls.

\subsection{Rational normal scrolls}
By definition, a \emph{rational normal scroll} (RNS for short, in the following) 
of type $(a_1,\dotsc,a_k)$, $S_{a_1,\dotsc,a_k}$, is the 
image of the $\p^{k-1}$-bundle $\p(E)=\p(\OO_{\p^1}(a_1)\oplus\dotsb\oplus\OO_{\p^1}(a_k))$, 
$\pi\colon\p(E)\to\p^1$, via 
the map $j$ given by $\OO_{\p(E)}(1)$ in $\p^N$, $N=\sum a_i+k-1$. 
Equivalently, one takes $k$ disjoint 
projective spaces of dimension $a_i$, $\p^{a_i}$, and $k$ rational normal curves $C_i\subset\p^{a_i}$, 
together with isomorphisms $\phi_i\colon\p^1\to C_i$ (if $a_i\neq 0$, constant maps otherwise); 
then
\begin{equation*}
S_{a_1,\dotsc,a_k}=\bigcup_{P\in\p^1}\gen{\phi_1(P),\dotsc,\phi_k(P)}.
\end{equation*}
We have also that
\begin{align*}
\deg (S_{a_1,\dotsc,a_k}) &=\sum a_i\\
&=N-k+1,
\end{align*}
and $\dim (S_{a_1,\dotsc,a_k})=k$. 
From the second description, 
it is an easy exercise to show that, if $P\in C_i$, then the projection of $S_{a_1,\dotsc,a_k}$ from 
$P$ is a rational normal scroll of type $(a_1,\dotsc,a_{i-1},a_i-1,a_{i+1},\dotsc,a_k)$, with 
the convention $$(a_1,\dotsc,a_{i-1},-1,a_{i+1},\dotsc,a_k)=(a_1,\dotsc,a_{i-1},a_{i+1},\dotsc,a_k).$$

We note that $j\colon \p(E) \to S_{a_1,\dotsc,a_k} \subset\p^N$ is an isomorphism (and $S_{a_1,\dotsc,a_k}$ 
is smooth) if (and only if) $a_i>0$, for all $i$. 

The \emph{Picard group} of $\p(E)$ is generated by the \emph{hyperplane class}, defined by $H:=[j^*\OO_{\p^N}(1)]$, 
and by its \emph{ruling}, \ie $F:=[\pi^*\OO_{\p^1}(1)]$, and the intersection product is given by  
\begin{equation*}
H^k=N-k+1, \qquad H^{k-1}F=1, \qquad F^2=0. 
\end{equation*}
Finally, we recall that the canonical class of $\p(E)$ is 
\begin{equation*}
K_{\p(E)}=-kH+(N-k-1)F. 
\end{equation*}

The following well-known theorem (due to A. Maroni, \cite{Ma}) relates the $n$-gonal curves with the RNS: 

\begin{thm}\label{thm:sac}
Let $C\subset\p(H^0(\omega_C)^*)$ be a canonical curve and $n$ is an integer $n\ge 4$. Then $C$ 
has a $g^1_n$ if and only 
if it is contained in a %smooth 
rational normal scroll of dimension $n-1$ (and so 
of degree $g-n+1$). Moreover, the $\check\p^{n-2}$'s which are the fibres of the 
scroll, cut on $C$ precisely the $g^1_n$. 
\end{thm}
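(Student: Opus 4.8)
The plan is to prove the two implications separately, in both cases using the linear spans of the members of the pencil as the bridge to the ruling of the scroll, and then to identify the scroll as a variety of minimal degree. Throughout I assume the $g^1_n$ to be complete and base-point-free (the gonality pencil is automatically so).

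First I would prove the \textbf{direct implication}. Let $\gen{D}$ be the $g^1_n$ and let $f\colon C\to\p^1$ be the induced degree-$n$ morphism; for $P\in\p^1$ write $D_P:=f^{*}P\in\gen{D}$. By Riemann--Roch one has $h^0(\omega_C(-D_P))=g-n+1$, so the geometric Riemann--Roch theorem (see \cite{ACGH}) shows that the linear span $\Lambda_P:=\gen{D_P}\subset\p^{g-1}$ is a $\p^{n-2}$. I then set $X:=\overline{\bigcup_{P\in\p^1}\Lambda_P}$: this is an irreducible, nondegenerate variety of dimension $n-1$, ruled by the $\p^{n-2}$'s $\Lambda_P$ over $\p^1$, and it contains $C$ since every point of $C$ lies on some fibre $D_P$, hence in $\Lambda_P$. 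The heart of this implication is the \textbf{degree computation} $\deg X=g-n+1$, which I would carry out following Schreyer \cite{Sch}: the quotients $H^0(\omega_C)/H^0(\omega_C(-D_P))$ globalise to a rank-$(n-1)$ bundle $\cE$ on $\p^1$ with $X=\p(\cE)$ embedded by $\OO_{\p(\cE)}(1)$, so that $\deg X=\deg\cE$, and computing $\deg\cE$ by Riemann--Roch on $\p^1$ (equivalently, from the splitting type of $f_{*}\omega_C$) yields $g-n+1$. Since $X$ is nondegenerate of dimension $n-1$ in $\p^{g-1}$, its degree then equals $\codim X+1$, i.e.\ $X$ has \emph{minimal degree}; being ruled by the linear spaces $\Lambda_P$ over $\p^1$ and of dimension $n-1\ge 3$ (as $n\ge 4$), the del Pezzo--Bertini classification forces $X$ to be a rational normal scroll whose rulings are the $\Lambda_P$. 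By construction these $\check\p^{n-2}$'s cut $\{D_P\}=\gen{D}$ on $C$, which is the ``moreover'' part.

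For the \textbf{converse} I start from $C\subset X=S_{a_1,\dots,a_{n-1}}$, a scroll of dimension $n-1$ (hence of degree $g-n+1$) in $\p^{g-1}$, with ruling $|F|$. Restriction gives a base-point-free pencil $R:=F|_C$ on $C$, i.e.\ the finite morphism $\pi|_C\colon C\to\p^1$, and I must show $\deg R=n$ and $\dim|R|=1$. A general fibre $\Phi_P\cong\p^{n-2}$ meets $C$ in $R_P:=\Phi_P\cap C$, and the key point is that $R_P$ \emph{spans} $\Phi_P$: otherwise the spans $\gen{R_P}$ would sweep out a scroll $X'\subsetneq X$ of strictly smaller dimension still containing $C$, contradicting that $\deg X=\codim X+1$ is minimal (equivalently, the nondegeneracy of $C$ inside $X$). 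Granting this, geometric Riemann--Roch applied to $R_P$ reads $n-2=\dim\gen{R_P}=\deg R_P-1-\dim|R_P|$, and since $\dim|R_P|\ge 1$ because the fibres move in a pencil, we conclude $\deg R_P=n$ and $\dim|R_P|=1$. Thus $\gen{R}$ is a $g^1_n$, cut on $C$ precisely by the fibres $\check\p^{n-2}$, as claimed.

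The \textbf{main obstacle} in both directions is the same numerical fact, namely that the scroll has the \emph{minimal} degree $g-n+1$: in the direct implication it is the computation $\deg\cE=g-n+1$ that promotes $X$ to a variety of minimal degree, and hence to a scroll; in the converse it is exactly minimality that guarantees $R_P$ spans the fibre and so forces $\deg R_P=n$. Once this is in place, the remaining ingredients---geometric Riemann--Roch and the del Pezzo--Bertini classification of minimal-degree varieties---are standard.
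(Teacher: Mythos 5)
Your proposal is correct in outline and, for the forward implication, takes a genuinely different route from the paper's. Both arguments begin the same way: geometric Riemann--Roch makes each span $\gen{D_P}$ a $\check\p^{n-2}$, and $X$ is the union of these spans. To identify $X$ as a \emph{rational normal} scroll, the paper simply observes that a rational scroll is either an RNS or a projection of one, and excludes the projection case because the canonical curve is projectively (hence linearly) normal; the degree $g-n+1$ then comes for free. You instead compute $\deg X=\deg\cE=g-n+1$ from the splitting type of $f_*\omega_C$ (Schreyer's bundle construction) and then invoke the del Pezzo--Bertini classification of varieties of minimal degree. Your route is longer but more self-contained, and it makes the degree statement the engine of the proof rather than a by-product; the paper's linear-normality argument is slicker but leaves the degree computation implicit.

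In the converse, the one step that does not hold up is the claim that $R_P$ spans the fibre $\Phi_P$, justified by saying that otherwise $C$ would lie in a scroll $X'\subsetneq X$ of smaller dimension, ``contradicting'' minimality of $\deg X$ or nondegeneracy. That is not a contradiction: $X'$ can perfectly well be nondegenerate, and minimality of $\deg X$ among varieties of \emph{its} dimension says nothing about lower-dimensional subvarieties through $C$ (this situation genuinely occurs when $C$ has gonality $<n$ yet lies on an $(n-1)$-dimensional scroll). Fortunately the claim is stronger than what the theorem needs: if $\gen{R_P}$ is only an $s$-plane, $s\le n-2$, geometric Riemann--Roch still gives $\dim\abs{R_P}=\deg R_P-1-s\ge 1$, hence a $g^1_{s+2}$ with $s+2\le n$, and therefore a $g^1_n$. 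This is essentially the paper's own (equally terse) argument, which only concludes $\deg D\le n$. Either settle for that, or, if you want $\deg R_P=n$ exactly, prove the spanning by ruling out the too-special linear series that a non-spanning $R_P$ would force on $C$, rather than by appealing to minimal degree.
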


\begin{rmk}
We put $n\ge 4$ only to avoid the case of the plane quintic, which has a 
$g^2_5$ instead of a $g^1_3$. Of course it has a $g^1_4$ and it is contained 
in a rational normal cubic threefold in $\check\p^5$. 
\end{rmk}

\begin{proof}
By the geometric version of the Riemann-Roch Theorem, a divisor $D\in g^1_n$ 
generates a $\check\p^{n-2}$ in $\check\p^{g-1}$. The union of these $\check\p^{n-2}$'s
generates 
a rational scroll $X$ of dimension $n-1$. Therefore it is a RNS or 
a projection of a RNS. 
It is not a projection since $C$ is projectively normal (and hence linearly normal).

For the vice versa, if $C$ is contained in the scroll $X$, then the $\check\p^{n-2}$'s of $X$ 
determine a linear system $\abs{D}$ of dimension at least one; 
then, again by the geometric version of the Riemann-Roch Theorem,
$\deg(D)\le n$, and the theorem is proved. 

%A classical result (due to B. Segre, see \cite{BSeg}) assure us that the above $\check\p^{n-2}$'s 
%are in general positions, and therefore $X$ cannot be a cone. 
\end{proof}

\begin{rmk}
A classical result (due to B. Segre, see \cite{BSeg}) assure us that the above $\check\p^{n-2}$'s in the proof of the 
theorem 
are in general positions for the general curve, and therefore $X$ is smooth if the curve $C$ is general. 
\end{rmk}

To ease exposition, we give the following: 

\begin{df}
We say that a RNS $X$ as in Theorem~\ref{thm:sac} is a \emph{balanced scroll}, if 
$X=\p(\OO_{\p^1}(a_1)\oplus\dotsb\oplus\OO_{\p^1}(a_{n-1}))$ is 
such that 
$a=\lfloor\frac{g}{n-1}\rfloor-1$, where $a:=\min_{i\in\{1,\dotsc,n-1\}}\{a_i\}$. 
\end{df}

By a moduli computation 
we note that a general $n$-gonal curve is contained in a balanced scroll (see for example \cite{DG}, \cite{Cha}, or \cite[Corollary 3.3]{GV}).

%%%%%%%

\subsection{Theorem B}
We analyse a canonical curve 
$C\subset\p(H^0(\omega_C)^*)$ having a $g^1_4$. 
Here, the intersection of the scroll $X$, given by Theorem~\ref{thm:sac}, 
with the $\check\p^{g-3}$, is 
no longer given by a finite number of points, but it is a (rational normal) 
curve. 
Then, we want to find a surface $S$ such that $C\subset S\subset X$, and if $\eta_1$ and $\eta_2$ are 
two $1$-forms on $C$, then, letting $\Gamma:=V(\II(S),\eta_1,\eta_2)$, it holds that 
$\II(\Gamma)$ is contained in $(\II(C),\eta_1,\eta_2)=F^\perp$. 

\begin{lem}\label{lem:acm}
Let $\phi_{\abs{K_S+C}}\colon S\to \p^{g-1}$ be a generically 1--1 map, where $S$ is a rational (smooth) surface and 
$C$ is a smooth curve of genus $g\ge 4$. Let us call $Y(\subset \p^{g-1})$ the image of this map. 
Moreover, let $\eta_1,\eta_2\in H^0(\OO_{\p^{g-1}}(1))$ be general sections and $\Gamma:=V(\II(Y),\eta_1,\eta_2)$ and  
finally, we suppose that $(\II(C),\eta_1,\eta_2)=F^\perp$, where 
$F$ is a cubic polynomial in the dual coordinates. Then 
$\II(\Gamma)\subset (\II(C),\eta_1,\eta_2)$. 
\end{lem}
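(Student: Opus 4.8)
The plan is to deduce the containment from two ingredients: the fact that the canonical curve $C$ lies on $Y$, and the fact that the ideal $(\II(Y),\eta_1,\eta_2)$ defining $\Gamma$ is already saturated. I will reduce the whole statement to an arithmetically Cohen--Macaulay property of $Y$.

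First I would use adjunction. Since $C\subset S$ is a divisor, the restriction sequence
\begin{equation*}
0\to\OO_S(K_S)\to\OO_S(K_S+C)\to\omega_C\to 0
\end{equation*}
together with the vanishings $H^0(\OO_S(K_S))=H^1(\OO_S(K_S))=0$ coming from the rationality of $S$ (so $q=p_g=0$), shows that the restriction $H^0(\OO_S(K_S+C))\to H^0(\omega_C)$ is an isomorphism. Hence $\phi_{\abs{K_S+C}}$ restricts on $C$ to the canonical map, so $\phi_{\abs{K_S+C}}(C)$ is exactly the given canonical curve and $C\subset Y$. Consequently $\II(Y)\subset\II(C)$, whence
\begin{equation*}
(\II(Y),\eta_1,\eta_2)\subset(\II(C),\eta_1,\eta_2)=F^\perp.
\end{equation*}

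Next I would reduce to a saturation statement. By construction $\Gamma=V((\II(Y),\eta_1,\eta_2))$, so $(\II(Y),\eta_1,\eta_2)\subset\II(\Gamma)$, while the containment we want runs the other way; the subtlety is that $F^\perp$ is Artinian, hence non-saturated, so $\II(\Gamma)$ could a priori acquire elements outside $F^\perp$. It therefore suffices to prove that $(\II(Y),\eta_1,\eta_2)$ is already saturated, i.e. $\II(\Gamma)=(\II(Y),\eta_1,\eta_2)$, which combined with the previous display yields $\II(\Gamma)\subset F^\perp$. In fact one needs slightly less: since $A^F$ has socledegree $3$, one has $F^\perp_d=T_d$ in the polynomial ring $T=\CC[\de_0,\dotsc,\de_{g-1}]$ for every $d\ge 4$, so $\II(\Gamma)_d\subset F^\perp_d$ holds automatically in degrees $\ge 4$, and it is enough that saturation introduces nothing new in degrees $\le 3$.

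The heart of the matter, and the step I expect to be the main obstacle, is to establish that $Y$ is arithmetically Cohen--Macaulay. Granting this, $T/\II(Y)$ is Cohen--Macaulay of dimension $3$, a general pair $\eta_1,\eta_2$ is a regular sequence on it, and the quotient $T/(\II(Y),\eta_1,\eta_2)$ is then Cohen--Macaulay of dimension $1$; having depth $1$, its irrelevant maximal ideal is not an associated prime, so $(\II(Y),\eta_1,\eta_2)$ is saturated and equals $\II(\Gamma)$, finishing the proof. To verify the ACM property I would prove projective normality of $\phi_{\abs{K_S+C}}$ together with the vanishing of the intermediate cohomology $H^1(S,\OO_S(k(K_S+C)))=0$ for all $k$, using once more that $S$ is rational and that the adjoint class $K_S+C$ is suitably positive (big and nef, with $\deg Y=(K_S+C)^2$ and the expected sectional genus). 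This cohomological control of the adjoint embedding of a rational surface is precisely the delicate point, and is the reason the lemma does not extend painlessly to the higher-codimension surfaces that arise for larger gonality.
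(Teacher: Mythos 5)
Your reduction is legitimate as far as it goes: the adjunction argument showing $C\subset Y$, hence $(\II(Y),\eta_1,\eta_2)\subset(\II(C),\eta_1,\eta_2)=F^\perp$, is correct, and it is true that the lemma would follow if $(\II(Y),\eta_1,\eta_2)$ were saturated, e.g.\ if $Y$ were arithmetically Cohen--Macaulay. Indeed the paper records exactly this observation in the remark immediately following the lemma, noting that Iliev and Ranestad are in that situation because their surfaces are cones over aCM curves. But that is precisely the point: the lemma exists because aCM-ness of these adjoint surfaces is \emph{not} available in this generality, and your proposal leaves the entire content of the proof --- projective normality of $\phi_{\abs{K_S+C}}$ together with $H^1(S,\OO_S(k(K_S+C)))=0$ for all $k$ --- as something you ``would prove'' from rationality and positivity of $K_S+C$. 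The intermediate cohomology vanishing is manageable, but surjectivity of $\sym^k H^0(K_S+C)\to H^0(S,k(K_S+C))$ for all $k$ (already for $k=2$) does not follow formally from the stated hypotheses (a generically 1--1 adjoint morphism on a rational surface); it is a genuinely delicate statement about adjoint linear series. So the proposal has a gap exactly at the step you yourself flag as ``the main obstacle.''

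The paper's proof circumvents this by never proving anything about $Y$ globally. Since $F^\perp$ contains every form of degree $\ge 4$ (socle degree $3$) and $\II(\Gamma)$ is generated in degrees $2$ and $3$, only $j=2,3$ matter. On $S$, $\Gamma$ pulls back to the complete intersection of $D_1,D_2\in\abs{K_S+C}$, and the Koszul resolution of $\II_{\Gamma/S}$ together with just two vanishings --- $q(S)=0$ and $H^1(S,K_S+C)=0$ --- gives
\begin{equation*}
H^0(S,\II_{\Gamma/S}(j))=\phi^*(\eta_1)\,H^0(S,(j-1)(K_S+C))+\phi^*(\eta_2)\,H^0(S,(j-1)(K_S+C)),\qquad j=2,3.
\end{equation*}
Then, for $Q\in H^0(\II_{\Gamma/\p^{g-1}}(j))$, the ideal-sheaf sequence for $Y\subset\p^{g-1}$ either puts $Q$ in $\II(Y)\subset\II(C)$ or expresses its restriction as $\eta_1 a+\eta_2 b$, and one lifts using the projective normality of the canonical curve $C$ itself (Noether's theorem), not of $Y$. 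If you want to salvage your route, you would need to supply an actual proof of projective normality of the adjoint embedding for the surfaces at hand, which is strictly more than the lemma requires.
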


\begin{proof}
 Since $\II(\Gamma)$ is generated in degree $2$ and $3$,
 by the hypothesis on $F$, it is sufficient to prove that 
$H^0(\p^{g-1},\II_{\Gamma/\p^{g-1}}(j))\subset (F^\perp)_j$, $j=2,3$. 

Let $D_1, D_2\in \abs{K_S+C}$ be such that $D_i:=\{\phi^*(\eta_i) =0)\}$, $i=1,2$. 
By the cohomology of the standard resolution of 
$\II_{\Gamma/S}:=\phi^*(\II_{\Gamma/Y})$, we obtain that 
$H^0(S,\II_{\Gamma/S}(2))=\phi^*(\eta_1) H^0(S,K_S+C)+ \phi^*(\eta_2) H^0(S,K_S+C)$ since $S$ is 
regular. 

Since $H^1(S,K_S+C)=0$, then $H^0(S,\II_{\Gamma/S}(3))=\phi^*(\eta_1) H^0(S,2(K_S+C))+ \phi^*(\eta_2) H^0(S,2(K_S+C))$. 

Now, consider the standard exact sequence of ideal sheaves:
\begin{equation*}
0\to\II_{Y/\p^{g-1}}(j)\to \II_{\Gamma/\p^{g-1}}(j)\to \II_{\Gamma/Y}(j)\to 0; 
\end{equation*}
Let $Q\in H^0(\p^{g-1},\II_{\Gamma/\p^{g-1}}(j))$, $j=2,3$. If $Q\in H^0(\p^{g-1},\II_{Y/\p^{g-1}}(j))$, we are done; otherwise, we obtain 
a $q\in H^0(\p^{g-1},\II_{\Gamma/Y}(j))\cong H^0(S,\II_{\Gamma/S}(j))$, and we can conclude by what we have proven above.

\end{proof}

\begin{rmk}Notice that the conclusion of Lemma~\ref{lem:acm} holds if the surface $Y$ 
is arithmetically Cohen-Macaulay. In the paper \cite{IR}, they use 
the fact that their surfaces are cones over arithmetically 
Cohen-Macaulay curves, hence any general linear section of these 
cones is aCM.
\end{rmk}

Let us now prove Theorem B:

\begin{thm}\label{thm:tetra}
If a canonical curve $C\subset\check\p^{g-1}$ has a $g^1_4$, then 
$F_{\eta_{1},\eta_{2}}$ is a sum of 
cubes of at most $\lceil\frac{3}{2}g-\frac{7}{2}\rceil$  %$\frac{4}{3}g-\frac{5}{3}$ 
linear forms, where 
$\eta_1,\eta_2\in H^{0}(C,\omega_{C})$ are general $1$-forms. 
\end{thm}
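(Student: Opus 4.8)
The plan is to apply the Apolarity Lemma (Lemma~\ref{lem:apo}) together with the machinery of Lemma~\ref{lem:acm}. By Theorem~\ref{thm:sac}, the tetragonal curve $C$ sits inside a three-dimensional rational normal scroll $X$ of degree $g-3$ in $\check\p^{g-1}$. The strategy is to interpose a surface $S$ with $C\subset S\subset X$, so that $S$ cuts out on $C$ the $g^1_4$ via a pencil of conics in the fibres $\check\p^2$ of $X$; concretely, we take $S$ to be a divisor in $X$ numerically of the form $2H-bF$ for a suitable integer $b$ (using the intersection theory $H^3=g-3$, $H^2F=1$, $F^2=0$ recalled above). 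First I would compute, for a divisor class $2H-bF$ on $X=\p(E)$, the degree of its image $Y$ under $\phi_{\abs{K_S+C}}$ and verify that $C$ is a member of (or cut by) the relevant linear system, choosing $b$ as large as possible while keeping $S$ effective and its image spanning; this determines $\deg(S)$ in terms of $g$, and the arithmetic should yield exactly $\lceil\frac{3}{2}g-\frac{7}{2}\rceil$.

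The next step is to identify $S$ with the rational surface appearing in the hypothesis of Lemma~\ref{lem:acm}, i.e.\ to check that the linear system embedding $S$ is $\abs{K_S+C}$ and that $\phi_{\abs{K_S+C}}$ is generically injective with image $Y$. Granting this, Lemma~\ref{lem:acm} gives $\II(\Gamma)\subset(\II(C),\eta_1,\eta_2)=F^\perp$ for $\Gamma:=V(\II(Y),\eta_1,\eta_2)$ and general $\eta_1,\eta_2$. The scheme $\Gamma$ is the zero-dimensional intersection of the surface with a general codimension-two linear space, so its length equals $\deg(S)=\deg(Y)$. By the Apolarity Lemma, the points of $\Gamma$ in $\check\p^{g-3}$ correspond to linear forms $\ell_1,\dotsc,\ell_s$ on $\p^{g-3}$ with $F_{\eta_1,\eta_2}=\sum\lambda_i\ell_i^3$, and $s=\deg(S)$. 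Thus the Waring bound is precisely the degree of $S$, and the whole theorem reduces to the numerical optimisation of that degree.

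The main obstacle is the construction and smoothness of $S$: one must exhibit an honest effective divisor $S\sim 2H-bF$ on $X$ that actually contains $C$, is a smooth rational surface, and whose embedding is by $\abs{K_S+C}$. Adjunction on the scroll, $K_X=-3H+(g-5)F$, gives $K_S=(K_X+S)|_S=(-H+(g-5-b)F)|_S$, so one needs to control $K_S+C$ restricted to $S$ and confirm it is very ample (or at least generically one-to-one). Verifying that such a divisor exists with the extremal value of $b$—equivalently that the pencil of conics cutting the $g^1_4$ can be realised inside $X$—is the delicate point, since it is exactly where the geometry of the surface must cooperate; this is presumably handled by a careful Riemann--Roch computation on $X$ showing $h^0(X,2H-bF)$ is large enough to contain $C$. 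I expect the proof to hinge on this existence-and-smoothness verification, with the Waring count following mechanically once $\deg(S)=\lceil\frac{3}{2}g-\frac{7}{2}\rceil$ is established.
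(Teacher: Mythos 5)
Your overall architecture matches the paper's: put $C$ in the three-dimensional scroll $X$ via Theorem~\ref{thm:sac}, interpose a surface $S\sim 2H-bF$, feed it to Lemma~\ref{lem:acm}, and read off the Waring bound from $\deg(S)$ via the Apolarity Lemma. The gap is precisely the step you defer: the existence of an \emph{irreducible, rational} surface of class $2H-bF$ with $b$ large enough, satisfying the hypotheses of Lemma~\ref{lem:acm}. The paper does not obtain this from a Riemann--Roch estimate on $h^0(X,2H-bF)$; it invokes Schreyer's structure theorem for tetragonal canonical curves (\cite[Corollary~(4.4) and \S\S 5--6]{Sch}), which says that $C$ is the complete intersection in $X$ of two irreducible divisors $Y_1\sim 2H-b_1F$ and $Y_2\sim 2H-b_2F$ with $b_1+b_2=g-5$, whence $\deg Y_i=2g-6-b_i$ and one of them has degree at most $\lceil\frac{3g-7}{2}\rceil$. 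A dimension count can plausibly produce an effective divisor in $|2H-bF|$ through $C$ for $b$ up to roughly $(g-5)/2$, but it gives you no control over irreducibility, rationality, or the fact that the embedding is by $|K_S+C|$ with generically injective $\phi_{|K_S+C|}$ --- all of which Lemma~\ref{lem:acm} requires and all of which the paper imports from Schreyer's analysis.

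Your heuristic ``choose $b$ as large as possible'' also points in a subtly wrong direction. Larger $b$ means smaller degree, and the minimal-degree surface ($Y_1$, with $b_1\ge b_2$) is exactly the one that can fail to be rational: when the $g^1_4$ is composed with an elliptic or hyperelliptic involution, $Y_1$ is not a rational surface (see Remark~\ref{rmk:sch} and \cite[\S 6.5]{Sch}). The theorem's bound $\lceil\frac{3}{2}g-\frac{7}{2}\rceil$ is the degree of the \emph{other} surface $Y_2$, the one whose fibre sections are generically irreducible conics and which does satisfy the hypotheses of Lemma~\ref{lem:acm}. So the numerical optimisation you describe is not ``mechanical'': the extremal divisor need not be usable, and the stated bound is what survives once rationality is enforced. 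To complete your argument you would need either to reprove the relevant parts of Schreyer's resolution or to cite them, and then to justify irreducibility and rationality of the chosen $Y_i$ rather than only its existence in the linear system.
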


\begin{proof}
By Theorem~\ref{thm:sac}, $C\subset X$, where $X\subset\check\p^{g-1}$ is a rational normal smooth 
threefold of degree $g-3$.  
 In the Chow ring of $X$ the curve $C$ is 
\begin{equation*}
[C]\sim 4 H^2+2(5-g)HF 
\end{equation*}
(see \cite[ Theorem 3.1]{GV}). By \cite[Corollary~(4.4)]{Sch} 
(see also \S 6 there), $C$ is 
the complete intersection of two irreducible surfaces of type
\begin{equation*}
[Y_1]\sim 2H-b_1 F,\ [Y_2]\sim 2H-b_2 F, \qquad \textup{with}\ b_1+b_2=g-5.  
\end{equation*}
It follows: 
\begin{align}
\deg Y_i&=(2H-b_i F)\cdot H^2\\
&=2g-6-b_i,\label{bi}. 
\end{align}
Therefore, if $b_1\ge b_2$, then $\deg(Y_2)\ge\deg(Y_1)$
and, since $b_2\ge \lfloor\frac{g-5}{2}\rfloor$,
we deduce 
\begin{align*}
\deg(Y_2)&\le 2g-6-\left \lfloor \frac{g-5}{2}\right\rfloor\\
&= \left\lceil \frac{3g-7}{2}\right\rceil.
\end{align*}

We note that $Y_{i}$ restricts to a quadric 
on each fibre $\Pi$ of the scroll $X$. 
The intersection $Y_{1}\cap Y_{2}\cap\Pi$ gives the four points 
of $C$ which are a divisor of the $g^1_4$. Again by \cite[\S 6]{Sch} 
at least $Y_{2}\cap \Pi$ is generically irreducible.  
Moreover, $Y_2$ satisfies the hypothesis of 
Lemma~\ref{lem:acm} (see for example \S 5 of \cite{Sch}). Then, the 
theorem is proved. 
\end{proof}

\begin{rmk}\label{rmk:sch}
A more precise analysis of the proof of the preceding Theorem can be done if we distinguish the cases in which $g$ is 
odd or even: if $g=2k+1$ is odd, it is immediate that the degree of both $Y_1$ and $Y_2$ is bounded by 
$2g-6-(k-2)=3k-2=\frac{3g-7}{2}$; instead, if $g=2k$ is even, we deduce that the degree of $Y_1$ is bounded by 
$2g-6-(k-2)=3k-4=\frac{3}{2}g-4$, which is strictly less than $2g-6-(k-3)=3k-3=\frac{3}{2}g-3$, the bound for $Y_2$. 
The problem is that in this situation $Y_1$ could not be rational: in this case the $g^1_4$ is composed by an elliptic or 
hyperelliptic involution: see \cite[\S 6.5]{Sch}. 
\end{rmk}

The surfaces $Y_i$ of the proof of the above theorem has been studied before (independently to us) 
and more deeply in 
\cite{bs} to give a stratification of the moduli space of the tetragonal curves.

%%%%%%%%%%%%%%

\subsection{Low genus cases}

We recall that every curve of genus $g\le 5$ has a $g^1_4$ (see  \cite[IV.5.5.1]{H}); 
so let us start to analyse the cases of low genus:

\subsubsection{g=6}
In this case, $C\subset\check\p^5$ and $\deg(C)=10$. If 
$C$ is tetragonal, it is contained in a smooth cubic threefold and we 
find a sharp estimate: 
Theorem~\ref{thm:tetra} says that there is a surface of 
degree at most $6$ containing $C$ and contained in $X$. 
Indeed, from Remark~\ref{rmk:sch} we can see that $C$ 
is contained in a quartic surface; but this is a surface of 
minimal degree, and it is either a RNS, in which case 
$C$ is trigonal, or it is a Veronese surface, in 
which case the $g^1_4$ is induced by the $g^2_5$
which corresponds to the conics of the Veronese surface, in accordance 
with Enriques-Petri Theorem.

\subsubsection{g=7}\label{g7}
In this case, $C\subset\check\p^6$ and $\deg(C)=12$; if $C$ is tetragonal it is contained in a smooth quartic threefold.
Theorem~\ref{thm:tetra} says that there is a surface of degree at most $7$ containing $C$ and contained in $X$. 
From---for example---\cite[\S 6]{Mu}, 
we see that $7$ is the correct estimate for a general tetragonal 
curve. But it is also shown, again in \cite[\S 6]{Mu}, 
that for the special ones, \ie if $C$ possesses a $g^2_6$, 
then $C$ is contained in a 
(possibly singular) \emph{sextic del Pezzo} surface. 
We give here an alternative proof of the existence of this surface. 

First, we note that the quartic scroll $X$  which contains $C$ is of type 
$X=\p(\OO_{\p^1}(1)\oplus\OO_{\p^1}(1)\oplus\OO_{\p^1}(2))$.
Let us consider our surface $S$ as in the proof of 
Theorem~\ref{thm:tetra}, \ie $[S]=2H-bF$ as a divisor on $X$.

Then,  
we project the $C$ to $\check\p^2$ by choosing the centre of projection 
$\pi\colon\check\p^6\dashrightarrow\check\p^2$ a $\check\p^3$
generated by a general plane of the scroll and a general point $P$ of the curve. 
So, 
we obtain a singular 
plane curve $Z$ of degree $7$. 
Let $V_i$'s be the singular points of $Z$. We can suppose that the 
points are of simple multiplicity $m_i$.

Let us now \emph{suppose} that the surface $S$ containing $C$ is  
the blowing-up of $\check\p^2$ in the points $V_i$'s: $\pi_{\{V_i\}}\colon S \to \check\p^2$. 
This means that the projection 
\begin{equation*}
\pi\mid_{S}\colon S\dashrightarrow \check\p^2
\end{equation*}
is generically 1--1. 

Let us denote by $H:=\pi_{\{V_i\}}^*\OO_{\check\p^2}(1)$ the hyperplane section of $S$ 
and by $E_i$'s the $(-1)$-curves on $S$ which correspond to the $V_i$'s. 
The complete linear system $\abs{4H+\sum_i(1-m_i)E_i}$ gives a generically 1--1 map 
$\phi\colon\ S\to\check\p^6$ such that $C=\phi(\pi_{\{V_i\}}^{-1}(Z))$. 
In fact, we can write, by adjunction
\begin{align*}
C&\in\abs{7H-\sum_i m_i E_i}\\
K_S&=-3H+\sum_i E_i\\
\omega_C&=(4H+\sum_i(1-m_i)E_i)_{\mid_C}.
\end{align*}
Then, the adjunction formula of $C$ on $S$ yields
\begin{equation*}
(7H-\sum_i m_iE_i)\cdot (4H+\sum_i(1-m_i)E_i)=12
\end{equation*}
which means, 
\begin{equation}\label{adj4}
\sum_i m_i(m_i-1)=16. 
\end{equation}

If we take a general plane $\Pi_t$, $t\in\p^1$ of the ruling of the scroll 
$\rho\colon X\to\p^1$, then $\pi\mid_{\Pi_t}\colon\Pi_t\to\check\p^2$ 
is a birational isomorphism. Now, through the 
four points of $C\cap \Pi_t$ there passes a pencil of conics $\Lambda_{\Pi_t}$. 
Denote by $Q_{\Pi_t}$ the generic conic of $\Lambda_{\Pi_t}$. 
Let us denote by 
$Q_{\Pi_t}':=\pi(Q_{\Pi_t})$. We want to show that there exists a pencil 
$\Lambda:=\{Q_t'\}_{t\in\p^1}$ in $\check\p^2$ such that $Q_t'$ comes from 
a general specialisation of $Q_{\Pi_t}$. In fact, if we consider the projection of $X$ from $P$, which we can 
suppose, by its generality, it is a general point of $C_2$, \ie 
a unisecant conic of the scroll, 
we obtain that $X$ is mapped to a balanced scroll $\p(\OO_{\p^1}(1)\oplus\OO_{\p^1}(1)\oplus\OO_{\p^1}(1))$, and 
one unisecant line gives in fact a fifth point in each plane which determines a conic in each plane of the ruling and 
then a pencil of conics when we project further to $\check\p^2$. 

So, $\Lambda$ cuts the $g^1_4$ on $Z$. Let $A_1,\dotsc,A_4$ be the base points of the
pencil; by construction, we have
\begin{equation*}
Q'_t\mid_Z=\sum_{i=1}^4 (n_iA_i+P'_{it}),
\end{equation*}
where $\{P_{1t},\dotsb,P_{4t}\}=\Pi_t\cap C$ and $P'_{it}=\pi(P_{it})$. 
Without loss of generality, by Bez\'out, we can assume $V_i=A_i$, and by the generality of $Q'_t$, $n_i=m_i$. 
But then
\begin{align*}
\deg Q'_t\mid_Z &=2\deg Z\\
&=14\\
&=4+\sum_{i=1}^4 n_i, 
\end{align*}
which means 
\begin{equation}\label{qat}
\sum_{i=1}^4 n_i=10,
\end{equation}
from which we deduce 
\begin{equation*}
\sum_{i=1}^4 n_i^2\ge 25,
\end{equation*}
which implies 
\begin{equation*}
\sum_{i=1}^4 n_i^2-n_i\ge 15. 
\end{equation*}
Now, $\sum_{i=1}^rm_i^2-m_i=16>15$, therefore $r=4$, and from \eqref{qat}, we deduce 
\begin{equation}\label{3k-34}
\sum_{i=1}^4 (m_i-1)=6. 
\end{equation}
From this, 
\begin{align*}
\deg(S)&=(4H+\sum_i(1-m_i)E_i)^2\\
&= 16-\sum_i(m_i-1)^2,
\end{align*}
but then, by \eqref{adj4} and \eqref{3k-34}
\begin{align*}
\sum_i(m_i-1)^2&=\sum_i (m_i(m_i-1)-(m_i-1))\\
&=16-6\\
&=10
\end{align*}
we obtain that 
\begin{equation*}
\deg(S)=6. 
\end{equation*}
Therefore having assumed that $S$ is the blowing-up of 
$\check\p^{2}$, we have that $\deg(S)=6$. In particular 
$[S]=2H-2F$ as a divisor on $X$, by \eqref{bi}. 

By the above proof, if the singular points 
$V_1,\dotsc,V_4$ are distinct, then their multiplicities must be 
$m_1=m_2=3$ and $m_3=m_4=2$, and therefore we have three $g^1_4$ on $C$: 
the first one given by the pencil 
of conics $\Lambda$, 
and the other two correspond 
to the pencils of lines through $V_1$ and $V_2$. 

Now we show the main result of this subsection
that the canonical tetragonal curves 
of genus $7$ with a $g^2_6$ can be realised exactly
as the above blowing-up of a curve $Z$. 

Let us see this. 
First of all, we recall that, in 
general, the $n$-gonal locus is irreducible in the (coarse) 
moduli space of algebraic curves 
of genus $g$, $\mathcal{M}_g$; this follows for example from \cite{F}. 

In the case $g=7$, 
much more can be said: see for example \cite[Table 1]{Mu}, 
where it is explained the stratification of this 
moduli space. In particular, inside $\mathcal{M}_7$, 
which has dimension $18$, there is 
the codimension one \emph{tetragonal locus},  $\mathcal{T}_7$. 
The general curve in it has only one $g^1_4$. 
Inside the tetragonal locus, there is 
the locus of curves possessing a $g^2_6$, $\mathcal{G}^2_6$. 
$\mathcal{G}^2_6$ has codimension one in $\mathcal{T}_7$ so, 
$\dim\mathcal{G}^2_6=16$. 
The general curve in $\mathcal{G}^2_6$ has exactly two $g^2_6$. 
Moreover it can have one, two or three $g^1_4$.

This case has been analysed 
deeply in \cite{Cas}. First of all, one can observe that 
$\mathcal{T}_7=\mathcal{T}_7^{(1)}\cup\mathcal{T}_7^{(2)}\cup\mathcal{T}_7^{(3)}\cup\mathcal{M}_7^{be}$, where 
$\mathcal{T}_7^{(i)}$  is the locus of the curves  carrying exactly $i$ linear series $g^1_4$'s, and 
$\mathcal{M}_7^{be}$ is the bielliptic locus. 
One of the main results of  \cite{Cas} is that the loci  $\mathcal{T}_7^{(2)}$ and $\mathcal{T}_7^{(3)}$ are 
(irreducible) rational subvarieties of dimensions $15$ and $16$, respectively, of $\mathcal{M}_7$. 
In particular, the general element of $\mathcal{G}^2_6$ 
is in $\mathcal{T}_7^{(3)}$, \ie it has three $g^1_4$'s. 

Precisely, in \cite[\S 2]{Cas} it is shown why $\mathcal{T}_7^{(3)}$ is rational of dimension $16$, with the 
following geometric 
argument. Let $C$ be our general tetragonal curve of genus $g = 7$ carrying three $g^1_4$'s. 
Then, $C$ has a sextic plane model $\tilde C \subset\check\p^3$ with three non–collinear nodes, which can be assumed 
to be $P_1=(1:0:0)$, $P_2=(0:1:0)$ and $P_3=(0:0:1)$. Clearly, such a model depends on the choice of the $g^2_6$.
Set $X := \{P_{1} , P_{2} , P_{3} \}$. If $\varphi\colon C \to C'$ is an isomorphism, 
then, in particular, it sends the $g^r_n$'s on $C$ into $g^r_n$'s on $C'$, thus it induces
a birational automorphism $\phi\in \bir_X (\check\p^2 )$, defined on the whole of $\check\p^2 \setminus X$, leaving $X$ fixed 
and sending $\tilde C$ to $\tilde C'$. 
$\bir_X(\check\p^2)$ is generated by the torus of diagonal matrices $\p T\subset \p \Gl(3)$, with $\dim(\p T)=2$, 
by the standard quadratic Cremona transformation $\mu(x:y:z) = (yz:xz:xy)$, which 
permutes the two $g^2_6$'s on $C$ and by the group of permutations of the $P_i$'s. 

The subspace $W\subset \CC[x, y, z]$ of forms of degree $6$ representing
plane curves having singularities at the points $P_i$'s has dimension $19$. Moreover, 
the action of $\bir_X(\check\p^2 )$ on $\check\p^2$ induces a linear action on $\abs{W}$, \ie 
$\bir_X (\check\p^2)$ can be realised as a subgroup of $\p \Gl(W )$. 

Consider then the natural map  $p\colon \Gl(W ) \to \p \Gl(W )$ and  let us define   
$G := p^{-1} (\bir_X (\check\p^2 )) \subset\Gl(W )$. Then, they notice that there exists 
a dominant rational map $W\to \mathcal T_7^{(3)}$, whose
fibres are the $G$-orbits of $W$, and therefore 
$\mathcal T_7^{(3)}$ is irreducible of dimension $16$. 

Let us show now that there is a map which associates our $7$-tic $Z\subset\check\p^2$ to one of their sextics $\tilde C$. 
We can suppose that the $P_1=(1:0:0)$ and $P_2=(0:1:0)$ are the triple points of $Z$, and $P_3=(0:0:1)$ and $P_4=(1:1:1)$ 
are its nodes. The two $g^2_6$ on $Z$ are given by the conics passing through $P_3$ and $P_4$, and 
through one of the two double points, \ie $\abs{D_i}:=\abs{2H-P_1-P_2-P_i}$, $i=3,4$. Therefore, a map of the type 
\begin{equation*}
\phi_{D_4}\colon Z\dashrightarrow \tilde C
\end{equation*}
is what we were looking for. In fact, $\tilde C:=\overline{\phi_{D_4}(D_4)}$ is a sextic which has $P_1$, $P_2$ and 
$P_3$ as its singular points, and they are nodes. 

Now, it is not difficult to show that we can come back 
from $\tilde C$ to one of our $Z$. 
Then even our construction 
gives all the curves of genus $7$ with a $g^2_6$.
Instead, we give a computation with the moduli (and this is sufficient, 
since the moduli spaces we are considering are all irreducible). 
As above,  if $\varphi\colon C \to C'$ ($C$ and $C'$ canonical curves, 
normalisation of the two septics $Z$ and $Z'$) 
is an isomorphism, then in particular, it sends the $g^r_n$'s on $C$ 
into $g^r_n$'s on $C'$, and induces 
a birational automorphism $\phi\in \bir_Y (\check\p^2 )$ 
defined in $\check\p^2 \setminus Y$, where $Y:=\{P_1 , P_2 , P_3, P_4 \}$. 

Now, the main difference with the case of the sextics is 
that for $Z$ and $Z'$ the three $g^1_4$ are given by the lines 
through $P_1$ and $P_2$, and the conics through $Y$, 
therefore in $\bir_Y (\check\p^2 )$ we have also the Cremona 
transformations of the plane which send the lines 
through $P_1$ (or $P_2$) to the conics through $Y$. 
These transformations form a group of dimension one, and  $\bir_Y (\check\p^2 )$ is generated by these transformations plus the 
maps which change $P_1$ with $P_2$ and $P_3$ with $P_4$. 

Now, the subspace $W'\subset \CC[x, y, z]$ of forms of degree $7$ 
representing
plane curves having singularities at the points $P_i$'s has dimension 
$9\cdot 4-2\cdot(3+6)=18$; therefore the family of 
curves we have found has dimension $16$ in $\mathcal T_7^{(3)}$, and 
therefore it coincides with the whole 
$\mathcal T_7^{(3)}$. 

So, we can summarise what we have proven in the following:

\begin{prop}\label{prop:7}
The general element of $\mathcal T_7^{(3)}$ can be obtained from a (tetragonal)
curve 
$C\subset S_{1,1,2} \subset \check\p^6$ contained in  
a sextic rational surface $S$ of type $[S]=2H-2F$ in the rational normal scroll  
$S_{1,1,2}$. 

The projection $\pi\colon\check\p^6\dashrightarrow\check\p^2$
from the $\check\p^3$ generated by a general plane of  $S_{1,1,2}$ 
and a general point of $C$, 
restricted to $S$, is generically 1--1 (\ie $S$ is the blowing-up of this 
$\check\p^2$). 

The curve $Z:=\pi(C)\subset\check\p^2$ has degree $7$ and has, as singular locus, four points in general position, two of them
are nodes and two are triple points. 

Moreover, for the general tetragonal canonical curve of genus $7$, 
the corresponding cubic $F_{\eta_1,\eta_2}$ is the sum of 
exactly $7$ cubes, while 
if a tetragonal canonical curve of genus $7$ is a 
general element of $\mathcal T_7^{(3)}$ (\ie it has two $g^2_6$'s), 
then  $F_{\eta_1,\eta_2}$ is the sum of exactly $6$ cubes. 
\end{prop}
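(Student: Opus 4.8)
The plan is to assemble the geometric facts already produced in the running discussion into the first three assertions, then to establish that the resulting family exhausts $\mathcal{T}_7^{(3)}$, and finally to read off the two Waring numbers from the apolarity machinery of Section~2.

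Working under the hypothesis that the surface $S\subset S_{1,1,2}$ containing $C$ is the blow-up $\pi_{\{V_i\}}\colon S\to\check\p^2$ at the singular points of $Z=\pi(C)$, the invariants of $S$ are forced by the computations already carried out: the adjunction relation \eqref{adj4} together with the pencil-of-conics relations \eqref{qat} and \eqref{3k-34} pins down exactly four singular points with multiplicities $m_1=m_2=3$ and $m_3=m_4=2$, whence $\deg S=16-\sum_i(m_i-1)^2=6$ and $[S]=2H-2F$ by \eqref{bi}. The four points lie in general position by the genericity of the centre of projection (Segre's result quoted after Theorem~\ref{thm:sac}), which gives the description of $Z$ as a plane septic with two triple points and two nodes. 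The one item I would still verify under this hypothesis is that the linear system $\abs{4H+\sum_i(1-m_i)E_i}$ really maps $S$ into the scroll $S_{1,1,2}$ with $C$ as the image of the proper transform of $Z$; this is a direct check on the dimensions and degrees already listed.

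To make the hypothesis genuine for the general curve I would prove that the family of such $Z$ fills $\mathcal{T}_7^{(3)}$. Since $\mathcal{T}_7^{(3)}$ is irreducible of dimension $16$ by \cite{Cas}, it suffices to exhibit a $16$-dimensional family. The septics with the prescribed singularities at $Y=\{P_1,P_2,P_3,P_4\}$ form the projective space $\p(W')$, and the singularity conditions impose $2(3+6)=18$ constraints on the $36$-dimensional space of septic forms, so $\dim W'=18$ and $\dim\p(W')=17$. Two such septics give isomorphic curves exactly when they are related by an element of $\bir_Y(\check\p^2)$; the projective stabiliser of four points in general position is finite, and the only positive-dimensional part consists of the Cremona transformations carrying the pencils of lines through $P_1$ or $P_2$ into the pencil of conics through $Y$, a one-parameter family. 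Hence $\dim\bir_Y(\check\p^2)=1$ and the quotient has dimension $17-1=16$, forcing equality with $\mathcal{T}_7^{(3)}$. Equivalently, I would exhibit the explicit birational link $\phi_{D_4}\colon Z\dashrightarrow\tilde C$ to Casnati's nodal sextic models, showing that the construction is reversible.

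Finally I would extract the Waring numbers. By Lemma~\ref{lem:acm} the scheme $\Gamma=V(\II(S),\eta_1,\eta_2)$ satisfies $\II(\Gamma)\subset F^\perp$, and since $S$ is a surface, $\Gamma$ has length $\deg S$; the Apolarity Lemma~\ref{lem:apo} then writes $F_{\eta_1,\eta_2}$ as a sum of $\deg S$ cubes, namely $6$ for $C\in\mathcal{T}_7^{(3)}$ and $7$ for the general tetragonal curve (where $S$ is the surface $Y_2$ of degree $\lceil\frac{3g-7}{2}\rceil=7$ from Theorem~\ref{thm:tetra}). For the lower bound I would use that the Hilbert function $1,5,5,1$ of $T=\R_C/\gen{\eta_1,\eta_2}$ gives $(F^\perp)_1=0$, so no linear form annihilates $F$ and $F$ is not a cone; hence its rank is at least $5$, with equality only if $F$ is a Fermat cubic, which by Theorem~A would force $C$ to be trigonal, contrary to assumption. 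This already pins the $\mathcal{T}_7^{(3)}$ value at exactly $6$. The hard part will be the sharpness of $7$ for the general tetragonal curve: a representation by $6$ cubes would, as in the genus-$7$ analysis above, produce a sextic surface and hence a $g^2_6$ on $C$, contradicting that the general tetragonal curve carries a unique $g^1_4$ and no $g^2_6$ (this is \cite[\S 6]{Mu}). Turning the implication ``a length-$6$ apolar scheme forces a $g^2_6$'' into a clean argument is the main obstacle, since it requires controlling which length-$6$ apolar schemes actually arise as hyperplane sections of surfaces inside the scroll.
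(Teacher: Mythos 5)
Your first three paragraphs track the paper's own route almost exactly: the identification of the multiplicities $m_1=m_2=3$, $m_3=m_4=2$ from \eqref{adj4}, \eqref{qat} and \eqref{3k-34}, the conclusion $\deg S=6$ and $[S]=2H-2F$, and the dimension count $\dim W'=36-18=18$, hence a $17-1=16$-dimensional family filling the irreducible $\mathcal T_7^{(3)}$, are precisely the content of Subsection~\ref{g7}. For the upper bounds ($6$ and $7$ cubes) you invoke Lemma~\ref{lem:acm} and the Apolarity Lemma as the paper does. Your lower bound for the $\mathcal T_7^{(3)}$ case is a genuinely different, and arguably cleaner, argument than the paper's: you read off $(F^\perp)_1=0$ from the Hilbert function $1,5,5,1$, so any length-$5$ apolar set consists of independent forms, $F$ would be Fermat, and Theorem~A would force $C$ trigonal or a plane quintic (impossible in genus $7$). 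The paper instead excludes surfaces of degree $\le 5$ through $C$ via the classification of varieties of minimal degree. Both are valid; yours has the merit of arguing directly on the cubic rather than on auxiliary surfaces.

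The genuine gap is the one you flag yourself: the sharpness of $7$ for the general tetragonal curve. The paper closes this by the classification of nondegenerate irreducible surfaces of degree $6$ in $\p^6$ (degree equal to codimension plus two, \cite{Fu}): such a surface has sectional geometric genus $0$ or $1$; genus $0$ (a projected rational normal surface) is excluded, and genus $1$ forces $S$ to be either a sextic del Pezzo surface, in which case its rational normal cubic curves cut a $g^2_6$ on $C$, or a cone over an elliptic curve, in which case projection from the vertex exhibits $C$ as bielliptic---both contradicting the generality of $[C]\in\mathcal T_7$. You should supply this classification rather than leave it as ``the main obstacle.'' That said, the step you correctly identify as the real difficulty---passing from an arbitrary length-$6$ scheme $\Gamma\subset\check\p^4$ apolar to $F$ back to a degree-$6$ surface containing $C$---is not addressed in the paper's proof either: the published argument reasons only about surfaces through $C$ and tacitly identifies minimal apolar schemes with general linear sections of such surfaces, the mechanism of \cite{IR}. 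So your diagnosis of where the argument is thinnest is accurate, but as written your proposal does not establish the ``exactly $7$'' claim, and completing it requires either the surface classification above together with that converse identification, or an independent lower bound on the Waring rank.
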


\begin{proof}
It remains to prove that $F_{\eta_1,\eta_2}$ is the sum of exactly $6$ cubes. 
We refer to \cite{Fu} for general facts about low degree varieties.  
Since $C$ is neither trigonal nor degenerate, 
the degree of such a surface $S$ cannot be $\le 5$, 
and $S$ cannot have sectional geometric 
genus zero (\ie it cannot be a projection of a rational normal surface).  

If $C$ is general, it cannot be contained in a surface of 
degree $6$, since this cannot have sectional geometric genus one; 
in fact, in this case, $S$ is either a Del Pezzo surface, and 
this would imply that there is a $g^2_6$ on $C$, cut out by  
rational normal cubic curves of $S$, 
or a cone over an elliptic curve, and therefore projecting 
from the vertex of the cone, we would see that $C$ is bielliptic.  
\end{proof}

%%%%%%%%%%%

\subsection{Special tetragonal curves}\label{ssec:stc} 

In this subsection we want to generalise 
the construction obtained in Subsection~\ref{g7}.

In the following Propositions~\ref{4gon}, \ref{4gon-1}, and \ref{4gon-2},
the surfaces we consider are as in Lemma~\ref{lem:acm}, so we can 
apply them to the Waring problem.

We study first the case with $g=3k$, since we can find explicitly a rational smooth 
surface $S$ of degree $\frac{4}{3}g-3$, such that $C\subset S\subset X$: 

\begin{prop}\label{4gon}
Let $C\subset\p(H^0(\omega_C)^*)$ be a tetragonal canonical curve
of genus $g=3k$, where $k\geq 2$, contained in a rational surface $S$ of type $[S]=2H-bF$ in a balanced scroll 
$X=\p(\OO_{\p^1}(k-1)\oplus\OO_{\p^1}(k-1)\oplus\OO_{\p^1}(k-1))$. 

Let us suppose that the projection $\pi\colon\check\p^{g-1}\dashrightarrow\check\p^2$
from $k-1$ general planes of the scroll restricted to $S$ is generically 1--1 (\ie $S$ is the blowing-up of this 
$\check\p^2$). 
Then,  
\begin{equation*}
\deg(S)=\frac{4}{3}g-3,  
\end{equation*}
or, equivalently, $b=\frac{2}{3}g-3$. 

In particular, $F_{\eta_1,\eta_2}$ is a sum of 
cubes of at most  $\frac{4}{3}g-3$ 
linear forms, where 
$\eta_1,\eta_2\in H^{0}(C,\omega_{C})$ are general. 
\end{prop}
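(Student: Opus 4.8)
The plan is to compute the degree of $S$ directly from the numerical data of the scroll and the divisor class $[S]=2H-bF$, then pin down $b$ using the hypothesis that $S$ is the blow-up of $\check\p^2$ via the projection $\pi$, exactly as in the genus-$7$ computation of Subsection~\ref{g7}. The guiding principle is that all the intersection numbers on the balanced scroll $X=\p(\OO_{\p^1}(k-1)^{\oplus 3})$ are forced by the relations $H^3=N-k+1$, $H^2F=1$, $F^2=0$ recalled above, together with $g=3k$ and $\deg X = g-3 = 3k-3$.

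First I would record the Chow-class of the tetragonal curve, $[C]\sim 4H^2+2(5-g)HF$, and the adjunction data on $X$: since $K_X=-3H+(N-4)F$ with $N=g-1$, the class $[S]=2H-bF$ gives, by adjunction on the threefold, the canonical and hyperplane classes of $S$ and of $C\subset S$. The two basic numerical inputs are the degree formula $\deg S = [S]\cdot H^2 = (2H-bF)\cdot H^2 = 2(g-3)-b$ (this is \eqref{bi} specialised to the balanced case), and the fact that $C\in\abs{2[S]-\text{(correction)}}$ forces a relation between $b$ and $g$. So the whole proposition reduces to determining $b$.

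The key step is the blow-up hypothesis. Writing $S$ as the blow-up $\pi_{\{V_i\}}\colon S\to\check\p^2$ at the singular points $V_i$ of the plane model $Z=\pi(C)$ of degree $d$, with exceptional curves $E_i$ and multiplicities $m_i$, I would reproduce the three equations that in the $g=7$ case read \eqref{adj4}, \eqref{qat}, \eqref{3k-34}. Concretely: the genus/adjunction formula for $C$ on $S$ gives $\sum_i m_i(m_i-1)$ in terms of $g$; the pencil of conics cutting the $g^1_4$ gives $\sum m_i$ via a Bézout count $2\deg Z = 4 + \sum m_i$; and combining these yields $\sum_i(m_i-1)$. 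Then $\deg S = d^2 - \sum_i m_i^2$ (the self-intersection of the canonical-type hyperplane class of $S$) collapses, using $\sum(m_i-1)^2 = \sum m_i(m_i-1) - \sum(m_i-1)$, to the clean value $\tfrac43 g - 3$. Finally, reading off $b$ from $\deg S = 2(g-3)-b = \tfrac43 g-3$ gives $b=\tfrac23 g-3$, and the Waring bound follows by applying Lemma~\ref{lem:acm} to $Y=S$ and Lemma~\ref{lem:apo}, since $\Gamma:=V(\II(S),\eta_1,\eta_2)$ then has length $\deg S$.

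\emph{The hard part will be} getting the projection/Bézout bookkeeping right for general $k$: the degree $d$ of the plane model and the number of base points of the conic pencil both depend on $k$ (unlike the fixed $d=7$, four-point situation of genus $7$), so the analogues of \eqref{adj4}--\eqref{3k-34} must be set up with $d$ and the $m_i$ left symbolic and only then specialised. I expect the main subtlety to be verifying that projecting from $k-1$ general planes indeed lands $C$ as a plane curve whose conic-pencil $g^1_4$ has the right intersection profile, and that the resulting surface $S$ genuinely satisfies the hypotheses of Lemma~\ref{lem:acm} (i.e.\ that $S$ is smooth, rational, and that the relevant $H^1$ vanishes); the purely numerical collapse to $\tfrac43 g-3$ is then routine arithmetic.
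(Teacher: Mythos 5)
Your plan follows the paper's proof essentially step for step: project from $k-1$ general planes to a plane model $Z$ of degree $2k+2$, use adjunction on the blow-up to get $\sum_i m_i(m_i-1)=4k(k-1)$, use the pencil of conics cutting the $g^1_4$ to get $\sum_{i=1}^4 n_i=4k$, and collapse $\deg S=(2k-1)^2-\sum_i(m_i-1)^2$ to $4k-3=\frac{4}{3}g-3$ via $\sum_i(m_i-1)^2=\sum_i m_i(m_i-1)-\sum_i(m_i-1)$. The one step your ``combining'' elides is the convexity inequality $\sum_{i=1}^4 n_i^2\ge 4k^2$ (equality iff all $n_i=k$), which, matched against the adjunction total, is what rules out singular points of $Z$ other than the four base points of the pencil and pins every $m_i$ to $k$ --- exactly the argument in the genus-$7$ template you cite.
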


% \begin{rmk}
% We write $\p^2$ instead of the more correct $\check\p^2$ 
% here and in the following propositions  
% to ease the exposition. 
% \end{rmk}

\begin{proof}
% Since $C$ is general, and $g=3k$, $X$ is a balanced scroll, 
% $X=\p(\OO_{\p^1}(k-1)\oplus\OO_{\p^1}(k-1)\oplus\OO_{\p^1}(k-1))$. 

% We project the 
% canonical curve $C$ to $\p^2$ by choosing the centre of projection 
% $\pi\colon\check\p^{g-1}\dashrightarrow\p^2$ (we write $\p^2$ instead of the more correct $\check\p^2$ 
% here and in the following proofs 
% to ease the exposition)
% generated by $k-1$ general planes of the scroll. 
% So, we obtain 
The proof follows the idea behind our construction of 
$\mathcal{T}^{(3)}_{7}$. The image of $C$ under $\pi$ is 
a singular 
plane curve $Z$ of degree $2k+2$. 
Let $V_i$'s be the singular points of $Z$. We can suppose that the 
points are of simple multiplicity $m_i$.

By the hypothesis, we can think of the surface $S$ containing $C$ as 
the blowing-up of $\check\p^2$ in the points $V_i$'s: $\pi_{\{V_i\}}\colon S \to \check\p^2$.
Let us denote by $H:=\pi_{\{V_i\}}^*\OO_{\check\p^2}(1)$ the hyperplane section of $S$ 
and by $E_i$'s the $(-1)$-curves on $S$ which correspond to the $V_i$'s. 
The complete linear system $\abs{(2k-1)H+\sum_i(1-m_i)E_i}$ gives a generically 1--1 map  
$\phi\colon\ S\to\check\p^{g-1}$ such that $C=\phi(\pi_{\{V_i\}}^{-1}(Z))$. 
In fact, we can write, by adjunction
\begin{align*}
C&\in\abs{(2k+2)H-\sum_i m_i E_i}\\
K_S&=-3H+\sum_i E_i\\
\omega_C&=((2k-1)H+\sum_i(1-m_i)E_i)_{\mid_C}.
\end{align*}
Then, the adjunction formula of $C$ on $S$ yields
\begin{equation*}
((2k+2)H-\sum_i m_iE_i)\cdot ((2k-1)H+\sum_i(1-m_i)E_i)=6k-2
\end{equation*}
which means, 
\begin{equation}\label{adj}
\sum_i m_i(m_i-1)=4k(k-1). 
\end{equation}

If we take a general plane $\Pi_t$, $t\in\p^1$ of the ruling of the scroll $\rho\colon X\to\p^1$, then $\pi\mid_{\Pi_t}\colon\Pi_t\to\check\p^2$ 
is an isomorphism. Now, through the 
four points of $C\cap \Pi_t$ there passes a pencil of conics $\Lambda_{\Pi_t}$. Denote by $Q_{\Pi_t}$ the generic conic of $\Lambda_{\Pi_t}$. 
Let us denote by 
$Q_{\Pi_t}':=\pi(Q_{\Pi_t})$. We want to show that there exists a pencil $\Lambda:=\{Q_t'\}_{t\in\p^1}$ in $\check\p^2$ such that $Q_t'$ comes from 
a general specialisation of $Q_{\Pi_t}$. In fact, if we consider the projection of $X$ from $k-2$ planes instead of 
$k-1$, we obtain that $X$ is mapped to a balanced scroll $\p(\OO_{\p^1}(1)\oplus\OO_{\p^1}(1)\oplus\OO_{\p^1}(1))$, and 
one unisecant line gives in fact a fifth point in each plane which determines a conic in each plane of the ruling and 
then a pencil of conics when we project further to $\check\p^2$. 

So, $\Lambda$ cuts the $g^1_4$ on $Z$. Let $A_1,\dotsc,A_4$ be the base points of the
pencil; by construction, we have
\begin{equation*}
Q'_t\mid_Z=\sum_{i=1}^4 (n_iA_i+P'_{it}),
\end{equation*}
where $\{P_{1t},\dotsb,P_{4t}\}=\Pi_t\cap C$ and $P'_{it}=\pi(P_{it})$. 
Without loss of generality, by Bez\'out, we can assume $V_i=A_i$, and by the generality of $Q'_t$, $n_i=m_i$. 
But then
\begin{align*}
\deg Q'_t\mid_Z &=2\deg Z\\
&=4k+4\\
&=4+\sum_{i=1}^4 n_i, 
\end{align*}
which means 
\begin{equation}\label{equat}
\sum_{i=1}^4 n_i=4k,
\end{equation}
from which we deduce 
\begin{equation}\label{ineq}
\sum_{i=1}^4 n_i^2\ge 4k^2,
\end{equation}
and the equality holds iff $n_i=k$, $\forall i$. 
Then 
\begin{equation*}\label{eq:ni}
\sum_{i=1}^4 n_i(n_i-1)\ge 4k(k-1),
\end{equation*}
which implies, by Equation \eqref{adj}, $m_i=n_i=k$ for $1\le i\le 4$ and $m_j=0$ for $j\ge 5$.

Then, 
\begin{equation}\label{3k-3}
\sum_i (m_i-1)=4k-4. 
\end{equation}

Now, 
\begin{align*}
\deg(S)&=((2k-1)H+\sum_i(1-m_i)E_i)^2\\
&= (2k-1)^2-\sum_i(m_i-1)^2,
\end{align*}
but then, by \eqref{adj} and \eqref{3k-3}
\begin{align*}
\sum_i(m_i-1)^2&=\sum_i (m_i(m_i-1)-(m_i-1))\\
&=4k(k-1)-(4k-4)\\
&=4k^2-8k+4
\end{align*}
we obtain that 
\begin{align*}
\deg(S)&=4k-3\\
&=\frac{4}{3}g-3. 
\end{align*}
\end{proof}

\begin{rmk}
We can show that the canonical curves and surfaces of the preceding proposition actually exist, in the following way. 
From the proof of the proposition, and with the same notation, we deduce that $S$ is the image of 
the blowing-up of $\check\p^2$ in the points $P_1,\dotsc,P_4$ by 
the linear system $\abs{(2k-1)H-(k-1)\sum_{i=1}^4P_i}$. 
Now, it is immediate to see that the dimension of $\abs{(2k-1)H-(k-1)\sum_{i=1}^4P_i}$ is at least $3k+1$,
and therefore, 
in order to show that $S$ exists, it is sufficient to show that this linear system  
is ample. This fact can be obtained by the Nakai-Moishezon Criterion, \cite[Theorem V.1.10]{H}: in fact, 
first of all, 
\begin{equation*}
((2k-1)H-(k-1)\sum_{i=1}^4P_i)^2= 4k-3>0. 
\end{equation*}
Then, if $D$ is an irreducible curve in the blowing up of  $\check\p^2$ in the points $P_1,\dotsc,P_4$, 
we can think of it as $D\in\abs{aH-\sum_{i=1}^4b_iP_i}$, 
with $a>0$, and $b_i\ge 0$, $\forall i$; 
if, by contradiction, we suppose that 
$L\cdot D \le 0$, with $L\in\abs{(2k-1)H-(k-1)\sum_{i=1}^4P_i}$, then, we deduce that 
\begin{equation*}
(2k-1)a-k\sum_{i=1}^4b_i\le 0, 
\end{equation*}
which means
\begin{align}
\sum_{i=1}^4b_i &\ge \frac{(2k-2)a+a}{k-1}\\
&= 2a+\frac{a}{k-1}\\
&> 2a. \label{bii}
\end{align}
From this, since we can write $\sum_{i=1}^4b_i>4\frac{a}{2}$, we obtain 
\begin{align}
\sum_{i=1}^4b_i^2&>4\frac{a^2}{4}\\
&=a^2.\label{bi2}
\end{align}
Now, since $D$ is irreducible, we infer, by the Clebsch formula,  
\begin{equation}\label{eq:con}
\binom{a-1}{2}-\sum_{i=1}^4\frac{b_i(b_i+1)}{2}\ge 0,
\end{equation}
where $p_a(D)=\binom{a-1}{2}$ is the arithmetic genus of $D$. 
But, by \eqref{bii} and \eqref{bi2}, 
\begin{align*}
\binom{a-1}{2}-\sum_{i=1}^4\frac{b_i(b_i+1)}{2}&<\frac{a^2-3a+2}{2}-\frac{a^2}{2}-a\\
&=-\frac{5}{2}a+1<0,
\end{align*}
which contradicts \eqref{eq:con}. 

Analogously, we can deduce that there exists canonical curves $C$ contained in $S$. In fact $C$, in $S$, is 
\begin{equation*}
C\in\abs{(2k+2)H-k\sum_{i=1}^4P_i}. 
\end{equation*}
Now, it is immediate to see that the dimension of $\abs{(2k+2)H-k\sum_{i=1}^4P_i}$ is at least $5k+5$, and therefore, 
in order to prove our claim, it is sufficient to show that the general element in this linear system  
is irreducible. This fact can be obtained again by the Nakai-Moishezon Criterion: in fact, 
first of all, 
\begin{align*}
C^2&=((2k+2)H-k\sum_{i=1}^4P_i)^2\\
&= 8k+4>0. 
\end{align*}
Then, if $D\in\abs{aH-\sum_{i=1}^4b_iP_i}$ is an irreducible curve in $S$, as above, 
we deduce that 
\begin{equation*}
(2k+2)a-k\sum_{i=1}^4b_i\le 0, 
\end{equation*}
which means
\begin{align*}
\sum_{i=1}^4b_i &\ge \frac{(2k+2)a}{k}\\
&= 2a+\frac{2a}{k}\\
&> 2a, 
\end{align*}
and we can conclude as above. 

Clearly, the same kind of results (with the same proofs) 
can be obtained in the cases of the following Propositions~\ref{4gon-1} and \ref{4gon-2}. 
\end{rmk}

We then pass to case $g=3k-1$: 

\begin{prop}\label{4gon-1}
Let $C\subset\p(H^0(\omega_C)^*)$ be a tetragonal canonical curve
of genus $g=3k-1$, where $k\geq 2$ contained in a rational surface $S$ of type $[S]=2H-bF$ in a balanced scroll 
$X=\p(\OO_{\p^1}(k-2)\oplus\OO_{\p^1}(k-1)\oplus\OO_{\p^1}(k-1))$. 

Let us suppose that the projection $\pi\colon\check\p^{g-1}\dashrightarrow\check\p^2$
from the linear space generated by $2$ general points of $C$ and $k-2$ general planes of the scroll, 
restricted to $S$, is generically 1--1 (\ie $S$ is the blowing-up of this 
$\check\p^2$). 

Then,  
$\deg(S)\le \frac{4}{3}g-\frac{5}{3}$ (or $b\ge \frac{2}{3}g-\frac{13}{3}$). 

In particular, $F_{\eta_1,\eta_2}$ is a sum of 
cubes of at most  $\frac{4}{3}g-\frac{5}{3}$ 
linear forms, where 
$\eta_1,\eta_2\in H^{0}(C,\omega_{C})$ are general. 
\end{prop}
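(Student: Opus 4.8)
The plan is to mirror the proof of Proposition~\ref{4gon}, tracking the two numerical changes forced by $g=3k-1$: the scroll acquires the smaller summand $\OO_{\p^1}(k-2)$, and the adjunction total picks up an extra $+2$. By Theorem~\ref{thm:sac} the tetragonal curve sits in the balanced scroll $X=\p(\OO_{\p^1}(k-2)\oplus\OO_{\p^1}(k-1)\oplus\OO_{\p^1}(k-1))$ of degree $g-3=3k-4$ in $\check\p^{g-1}$, whose fibres cut the $g^1_4$. First I would analyse the plane model $Z:=\pi(C)\subset\check\p^2$ coming from the hypothesised generically $1$--$1$ projection: since the centre meets $C$ in the $2$ chosen points of $C$ together with the $4(k-2)$ points cut by the $k-2$ fibres, $Z$ has degree $\deg C-2-4(k-2)=(6k-4)-2-4(k-2)=2k+2$. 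Writing $\pi_{\{V_i\}}\colon S\to\check\p^2$ for the blow-up at the singular points $V_i$ of multiplicities $m_i$, I record $C\in\abs{(2k+2)H-\sum_i m_iE_i}$, $K_S=-3H+\sum_i E_i$, and $\omega_C=(K_S+C)\mid_C$ with $K_S+C=(2k-1)H+\sum_i(1-m_i)E_i$.

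Next I would extract the two governing identities. Adjunction $C\cdot(K_S+C)=2g-2=6k-4$ gives, exactly as in Proposition~\ref{4gon} but with the smaller genus,
\[
\sum_i m_i(m_i-1)=4k^2-4k+2,
\]
which is the $+2$ correction to the value $4k(k-1)$ of the $g=3k$ case. For the second identity I reproduce the pencil-of-conics argument: projecting $X$ from fewer centres than are used for $\pi$ realises it as a scroll on which a unisecant supplies a fifth point in every fibre, so that the $g^1_4$ is cut on $Z$ by a pencil $\Lambda=\{Q'_t\}$ of conics with base points $A_1,\dotsc,A_4$. After identifying $V_i=A_i$ and $n_i=m_i$ by Bez\'out and genericity, the count $2\deg Z=\sum_{i=1}^4 n_i+4$ yields $\sum_{i=1}^4 n_i=4k$, whence by convexity $\sum_{i=1}^4 n_i(n_i-1)\ge 4k(k-1)$, with equality iff all $n_i=k$.

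Finally I would combine these. Comparing the adjunction total with the base-point contribution bounds the excess singular points by $\sum_{j\ge5}m_j(m_j-1)\le 2$; since every singular point has $m_j\ge2$, there is at most one further node, so $\sum_{j\ge5}(m_j-1)\le1$ and $\sum_i(m_i-1)\le(4k-4)+1=4k-3$. Substituting into
\[
\deg S=(K_S+C)^2=(2k-1)^2-\sum_i(m_i-1)^2=\sum_i(m_i-1)-1,
\]
where the last equality uses $\sum_i m_i(m_i-1)=4k^2-4k+2$, gives $\deg S\le 4k-4\le 4k-3=\frac43 g-\frac53$, equivalently $b\ge\frac23 g-\frac{13}{3}$. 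Because $S$ is one of the surfaces of Lemma~\ref{lem:acm}, the scheme $\Gamma=V(\II(S),\eta_1,\eta_2)$ satisfies $\II(\Gamma)\subset F^\perp$, and Lemma~\ref{lem:apo} converts the bound on $\deg S$ into the asserted bound on the Waring number of $F_{\eta_1,\eta_2}$.

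The hard part will be the second identity rather than the arithmetic: one must check that projecting from $2$ points of $C$ together with $k-2$ fibres---needed precisely because the summand $\OO_{\p^1}(k-2)$ prevents collapsing $X$ to $\check\p^2$ by fibres alone---still produces a genuine pencil of conics cutting the $g^1_4$, and that the four base points really are singular points of $Z$ with $n_i=m_i$. The second delicate point is the bookkeeping of the excess singularities: the extra $+2$ in the adjunction total must be absorbed without raising $\deg S$ above the claimed bound, which is exactly what $\sum_{j\ge5}m_j(m_j-1)\le 2$ secures. The actual existence of such $C$ and $S$ and the genericity of the projection are granted by the hypotheses of the proposition and by the Nakai--Moishezon argument used in the remark following Proposition~\ref{4gon}.
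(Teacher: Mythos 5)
Your proof is correct and follows essentially the same route as the paper's: the degree-$(2k+2)$ plane model, adjunction giving $\sum_i m_i(m_i-1)=4k^2-4k+2$, the pencil of conics giving $\sum_{i=1}^4 n_i=4k$, and the case analysis showing there is at most one excess node. In fact your final count $\deg S\le 4k-4$ in the five-point case is slightly sharper than the paper's stated $4k-3$ (the paper writes $\sum_i(m_i-1)=4k-2$ where the data $m_1=\dotsb=m_4=k$, $m_5=2$ give $4k-3$), but both lie within the claimed bound $\frac{4}{3}g-\frac{5}{3}$.
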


\begin{proof}
We can follow the proof of the preceding result: 
% since $C$ is general, and $g=3k-1$, $X$ is a balanced scroll, 
% $X=\p(\OO_{\p^1}(k-2)\oplus\OO_{\p^1}(k-1)\oplus\OO_{\p^1}(k-1))$; 
% then, we project $C$ first to $\check\p^4$ by choosing the centre of projection 
% $\pi\colon\check\p^{g-1}\dashrightarrow\check\p^4$ 
% generated by $k-2$ general planes of the scroll. 
we have to consider  
% curve $C'$ of degree $2k+4$ in the cone 
% $Y=\p(\OO_{\p^1}\oplus\OO_{\p^1}(1)\oplus\OO_{\p^1}(1))\subset\check\p^4$. 
% Projecting $C'$ from two of its general points, we obtain 
a plane curve $Z$ of degree $2k+2$. 
Let $V_i$'s be the singular points of $Z$, 
and we can suppose that the points are of simple multiplicity $m_i$.

We can think of the surface $S$ containing $C$ as 
the blowing-up of $\check\p^2$ in the points $V_i$'s: $\pi_{\{V_i\}}\colon S \to \check\p^2$.
Let us denote by $H:=\pi_{\{V_i\}}^*\OO_{\check\p^2}(1)$ the hyperplane divisor of $S$ 
and by $E_i$'s the $(-1)$-curves on $S$ which correspond to the $V_i$'s. 
The complete linear system $\abs{(2k-1)H-\sum_i (1-m_i)E_i}$ gives a generically 1--1 map  
$\phi\colon\ S\to\check\p^{g-1}$ such that $C=\phi(\pi_{\{V_i\}}^{-1}(Z))$. 
Then, the adjunction formula of $C$ on $S$ in this case yields
\begin{equation*}
((2k+2)H-\sum_i m_iE_i)\cdot ((2k-1)H-\sum_i(1-m_i)E_i)=6k-4
\end{equation*}
which means, 
\begin{equation*}\label{adj-1}
\sum_i m_i(m_i-1)=2(2k^2-2k+1). 
\end{equation*}
Again, the $g^1_4$, is cut out on $Z$ by a pencil of conics, 
and as in the proof of the preceding proposition, with
the same notations, we deduce 
\begin{equation*}
\sum_{i=1}^4 n_i=4k
\end{equation*}
and 
\begin{equation*}
\sum_{i=1}^4 n^2_i\ge 4k^2,
\end{equation*}
so that 
\begin{equation*}
\sum_{i=1}^4 n_i^2-n_i\ge 4k^2-4k.
\end{equation*}
The situation here is a little more complicated, since 
$\sum_{i=1}^r m_i(m_i-1)=4k^2-4k+2> 4k^2-4k$, so we have 
two possibilities. The first one is that $r>4$, which means 
that $r=5$, $m_1=n_1=\dotsb m_4=n_4=k$ and $m_5=2$, or $r=4$, 
but then $\sum_{i=1}^4 n^2_i= 4k^2+2$ and $\sum_i m_i=4k$.

Now, 
\begin{align*}
\deg(S)&=((2k-1)H-\sum_i(1-m_i) E_i)^2\\
&= (2k-1)^2-\sum_i(m_i-1)^2\\
&=\sum_i(m_i-1)-1,
\end{align*}
but then, 
if $r=5$, we deduce $\sum_i (m_i-1)=4k-2$ and then 
we obtain that 
\begin{align*}
\deg(S)&=4k-3\\
&=\frac{4}{3}g-\frac{5}{3}; 
\end{align*}
if instead $r=4$, 
\begin{align*}
\deg(S)&=4k-5\\
&=\frac{4}{3}g-\frac{11}{3}. 
\end{align*}

\end{proof}

Finally, we analyse the case $g=3k+1$: 

\begin{prop}\label{4gon-2}
Let $C\subset\p(H^0(\omega_C)^*)$ be a tetragonal canonical curve
of genus $g=3k+1$, where $k\geq 2$, contained in a rational surface $S$ of type $[S]=2H-bF$ in a balanced scroll 
$X=\p(\OO_{\p^1}(k-1)\oplus\OO_{\p^1}(k-1)\oplus\OO_{\p^1}(k))$. 

Let us suppose that the projection $\pi\colon\check\p^{g-1}\dashrightarrow\check\p^2$
from the linear space generated by a general point of $C$ and $k-1$ general planes of the scroll, 
restricted to $S$, is generically 1--1 (\ie $S$ is the blowing-up of this 
$\check\p^2$). 

Then, 
\begin{equation*}
\deg(S)=\frac{4}{3}g-\frac{10}{3}, 
\end{equation*} 
or, equivalently, $b=\frac{2}{3}g-\frac{8}{3}$. 

In particular, $F_{\eta_1,\eta_2}$ is a sum of 
cubes of at most  $\frac{4}{3}g-\frac{10}{3}$ 
linear forms, where 
$\eta_1,\eta_2\in H^{0}(C,\omega_{C})$ are general. 
\end{prop}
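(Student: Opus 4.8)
The plan is to follow the template of Propositions~\ref{4gon} and \ref{4gon-1}, the one genuine novelty being that the integer balancing involved is no longer even. First I would compute the degree of the plane model $Z:=\pi(C)$. Since $\deg C=2g-2=6k$ and the centre of projection meets $C$ in the $4(k-1)$ points cut by the $k-1$ general planes of the scroll (each plane meeting $C$ in a divisor of the $g^1_4$ by Theorem~\ref{thm:sac}) together with the chosen general point of $C$, the generic injectivity of $\pi\mid_S$ gives $\deg Z=6k-4(k-1)-1=2k+3$. Realising $S$ as the blow-up $\pi_{\{V_i\}}\colon S\to\check\p^2$ at the singular points $V_i$ of $Z$, of multiplicities $m_i$, with $H:=\pi_{\{V_i\}}^*\OO_{\check\p^2}(1)$ and exceptional curves $E_i$, I would record
\begin{align*}
C&\in\abs{(2k+3)H-\sum_i m_i E_i},\\
K_S&=-3H+\sum_i E_i,\\
\omega_C&=(2kH+\sum_i(1-m_i)E_i)_{\mid_C},
\end{align*}
so that the adjunction formula $((2k+3)H-\sum_i m_iE_i)\cdot(2kH+\sum_i(1-m_i)E_i)=2g-2=6k$ yields
\begin{equation*}
\sum_i m_i(m_i-1)=4k^2.
\end{equation*}

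Next I would reproduce the pencil-of-conics argument. Projecting $X$ from the chosen point of $C$ and only $k-2$ of the scroll planes (instead of $k-1$) sends it to the balanced scroll $\p(\OO_{\p^1}(1)\oplus\OO_{\p^1}(1)\oplus\OO_{\p^1}(1))$; a unisecant line then produces a fifth point in each ruling plane, hence a conic in each plane and, after the final projection to $\check\p^2$, a pencil $\Lambda=\{Q'_t\}$ cutting the $g^1_4$ on $Z$. Writing $A_1,\dotsc,A_4$ for its base points, which by B\'ezout I identify with four of the $V_i$, and $n_i$ for the local multiplicities (so $n_i=m_i$ by the generality of $Q'_t$), the identity $\deg(Q'_t\mid_Z)=2\deg Z$ reads $4+\sum_{i=1}^4 n_i=4k+6$, whence $\sum_{i=1}^4 n_i=4k+2$.

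The step I expect to be the crux is the ensuing minimisation, precisely because $4k+2$ is not divisible by $4$, so the ``all $n_i$ equal'' equality case exploited in Propositions~\ref{4gon} and \ref{4gon-1} is unavailable. I would instead invoke the integer-constrained estimate: among four non-negative integers with sum $4k+2$, the sum of squares $\sum n_i^2$ is minimised \emph{only} by $\{k+1,k+1,k,k\}$, giving $\sum_{i=1}^4 n_i^2\ge 4k^2+4k+2$ and hence $\sum_{i=1}^4 n_i(n_i-1)\ge 4k^2$. Comparing with $\sum_i m_i(m_i-1)=4k^2$, and using $m_i=n_i$ on the base points together with $m_j(m_j-1)\ge 0$ for any further singular point, I would force equality throughout: there are no singular points besides $V_1,\dotsc,V_4$, and $\{m_1,m_2,m_3,m_4\}=\{k+1,k+1,k,k\}$. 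For $k=2$, $g=7$ this recovers the multiplicities $3,3,2,2$ and the sextic surface of Subsection~\ref{g7}, which is a reassuring consistency check.

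Finally, from $\sum_{i=1}^4(m_i-1)=4k-2$ I would compute
\begin{align*}
\deg(S)&=(K_S+C)^2=(2k)^2-\sum_i(m_i-1)^2\\
&=4k^2-(\sum_i m_i(m_i-1)-\sum_i(m_i-1))\\
&=4k^2-(4k^2-(4k-2))=4k-2=\frac{4}{3}g-\frac{10}{3},
\end{align*}
and then $b=2g-6-\deg(S)=\frac{2}{3}g-\frac{8}{3}$ by \eqref{bi}. Since $S$ is a smooth rational surface satisfying the hypotheses of Lemma~\ref{lem:acm}, the claimed bound on the Waring number of $F_{\eta_1,\eta_2}$ follows.
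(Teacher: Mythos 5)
Your proof is correct and follows essentially the same route as the paper's: project to a plane curve $Z$ of degree $2k+3$, use adjunction to get $\sum_i m_i(m_i-1)=4k^2$, exhibit the pencil of conics cutting the $g^1_4$, and minimise $\sum n_i^2$ subject to $\sum n_i=4k+2$. The only (harmless) difference is in the minimisation step: the paper uses the real bound $\sum n_i^2\ge(2k+1)^2=4k^2+4k+1$ and rules out $r>4$ by noting an extra singular point would contribute at least $2$, whereas you use the sharper integer bound $\sum n_i^2\ge 4k^2+4k+2$, which additionally pins down the multiplicities as $\{k+1,k+1,k,k\}$ — a slight strengthening consistent with the genus-$7$ case.
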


\begin{proof}
As above 
% since $C$ is general, and $g=3k+1$, $X$ is a balanced scroll, 
% $X=\p(\OO_{\p^1}(k-1)\oplus\OO_{\p^1}(k-1)\oplus\OO_{\p^1}(k))$; 
% then, we project $C$ first to $\check\p^3$ by choosing the centre of projection 
% $\pi\colon\check\p^{g-1}\dashrightarrow\check\p^3$ 
% generated by $k-1$ general planes of the scroll. 
% So, we obtain a 
% curve $C'$ of degree $2k+4$ in 
% $\p^3=\p(\OO_{\p^1}\oplus\OO_{\p^1}\oplus\OO_{\p^1}(1))$. 
% Projecting $C'$ from one of its general points,
we obtain 
a plane curve $Z$ of degree $2k+3$. 
Let $V_i$'s be the singular points of $Z$, 
and we can suppose that the points are of simple multiplicity $m_i$.

We can think of the surface $S$ containing $C$ as 
the blowing-up of $\check\p^2$ in the points $V_i$'s: $\pi_{\{V_i\}}\colon S \to \check\p^2$.
Let us denote by $H:=\pi_{\{V_i\}}^*\OO_{\check\p^2}(1)$ the hyperplane divisor of $S$ 
and by $E_i$'s the $(-1)$-curves on $S$ which correspond to the $V_i$'s. 
The complete linear system $\abs{2kH-\sum_i(1-m_i) E_i}$ gives a generically 1--1 map  
$\phi\colon\ S\to\check\p^{g-1}$ such that $C=\phi(\pi_{\{V_i\}}^{-1}(Z))$. 
Then, the adjunction formula of $C$ on $S$ in this case yields
\begin{equation*}
((2k+3)H-\sum_i m_iE_i)\cdot (2kH-\sum_i(1-m_i)E_i)=6k
\end{equation*}
which means, 
\begin{equation*}\label{adj-2}
\sum_i m_i(m_i-1)=4k^2. 
\end{equation*}
Again, the $g^1_4$, is cut out on $Z$ by a pencil of conics, 
and as in the proof of the preceding results, with
the same notations, we deduce 
\begin{equation*}
\sum_{i=1}^4 n_i=4k+2 
\end{equation*}
and 
\begin{equation*}
\sum_{i=1}^4 n^2_i\ge 4k^2+4k+1,
\end{equation*}
so that 
\begin{equation*}
\sum_{i=1}^4 n_i^2-n_i\ge 4k^2-1.
\end{equation*}
The situation here is that 
$\sum_{i=1}^r m_i(m_i-1)=4k^2> 4k^2-1$, and we can consider 
only the case $r=4$, for which $\sum_i (m_i-1)=4k-2$. 
Now, 
\begin{align*}
\deg(S)&=(2kH-\sum_i(1-m_i) E_i)^2\\
&= 4k^2-\sum_i(m_i-1)^2\\
&=\sum_i(m_i-1)\\
&=4k-2\\
&=\frac{4}{3}g-\frac{10}{3}. 
\end{align*}

\end{proof}

% We have analysed all the possible tetragonal curves in Propositions~\ref{4gon}, \ref{4gon-1}, 
% and \ref{4gon-2}, finding that the worst estimate is in Proposition~\ref{4gon-1}, and therefore 
% we have proven [????????].

% There tetragonal special tetragonal curves are indeed ``general'', in the 
% sense that they are contained in a balanced scroll.  

The worst estimate of the preceding proposition is $\frac{4}{3}g-\frac{5}{3}$, which is the estimate reported in the 
abstract.

\subsubsection{A generalisation}\label{sec:special}
In this subsection we will show that the bounds for the surfaces 
given in the three propositions of Subsection~\ref{ssec:stc} 
extends also to the case where $C$ is contained in a non-balanced scroll $X$. 

Instead of a check case by case, where we expect even better 
estimates, we concern ourselves only
to extend the bounds  to these special cases.
To obtain this result we consider the tetragonal loci 
${\mathcal{T}}_{g}\subset{\mathcal{M}}_{g}$ inside the moduli space of
smooth curves of genus $g$. Let 
${\mathcal{T}}^{s}_g\subset{\mathcal{T}}_{g}$ be the loci
of \emph{special} tetragonal curves, \ie not contained in a balanced scroll. It can be
shown that  ${\mathcal{T}}^{s}_g$ is a proper subscheme of 
${\mathcal{T}}_{g}$  and then, given a 
curve $C$ such that $[C]\in {\mathcal{T}}^{s}_g$, there exists an open
set $U\subset {\mathcal{M}}_{g}$, the local universal family
$\rho\colon{\mathcal{U}}\to U$ and a nontrivial morphism
$\Delta\to U$, where $\Delta\subset \CC$ is the unitary 
disk, such that the pull-back family 
$\pi\colon{\mathcal{C}}\rightarrow\Delta$ has the central fibre
$C_{0}=C$ 
which is a special
tetragonal curve and the general one is in 
${\mathcal{T}}_{g}\setminus {\mathcal{T}}^{s}_g$. More formally, we
recall that by the Hurwitz formula $\omega_{C}\simeq
f^{\star}(\omega_{ {\mathbb P}^{1} })\otimes R$, where $R$ is the
ramification divisor of the morphism $f\colon C\to \p^{1}$ which gives the $g^{1}_{4}$. Since $C$ is general in 
${\mathcal{T}}^{s}_g$, we can assume 
$2p_{\infty}^{1}+p_{\infty}^{2}+p_{\infty}^{3}=f^{\star}(\infty)$ and 
$2p_{0}^{1}+p_{0}^{2}+p_{0}^{3}=f^{\star}(0)$. In particular, we
identify a meromorphic $1$-form
$\frac{df}{f}\in
H^{0}(C,\omega(\sum_{i=1}^{3}p_{\infty}^{i}+\sum_{i=1}^{3}p_{0}^{i}))$. 
Inside the vector space $H^{1}(C, T_{C}(-\sum_{i=1}^{3}p_{\infty}^{i}-
\sum_{i=1}^{3}p_{0}^{i}))$ which parametrises the first order
deformations of $(C, p_{\infty}^{1},\dotsc ,p_{0}^{3})$, we want to
identify the subspace ${\mathcal{T}}_{f}$
of the first order deformations which extends
the $g^{1}_{4}$. So, consider $\pi\colon{\mathcal{C}}_{\epsilon}
\rightarrow{\spec}\,{\CC}\frac{[\epsilon]}{(\epsilon^{2})}$ an 
infinitesimal deformation given by $\eta\in H^{1}(C,
T_{C}(-\sum_{i=1}^{3}p_{\infty}^{i}-
\sum_{i=1}^{3}p_{0}^{i}))$. Let $P_{\infty}^{i}$, $P_{0}^{i}$ be the
infinitesimal sections which extend $p_{\infty}^{i}$, $p_{0}^{i}$,
respectively, where $i=1,2,3.$ It is a trivial remark that
the extension 
\begin{equation}\label{vedid}
0\rightarrow \OO_{C}\rightarrow
\Omega^{1}_{\mathcal{C}_{\epsilon}}({\log}(\sum_{i=1}^{3}P_{\infty}^{i}+\sum_{i=1}^{3}P_{0}^{i}))_{|C}\rightarrow
\omega_{C}(\sum_{i=1}^{3}p_{\infty}^{i}+\sum_{i=1}^{3}p_{0}^{i})\rightarrow 0
\end{equation} 
represents the class 
\begin{align*}
\eta&\in{\Ext}^{1}( \omega_{C}(\sum_{i=1}^{3}p_{\infty}^{i}+\sum_{i=1}^{3}p_{0}^{i},\OO_{C}))\\
&\simeq H^{1}(C,T_{C}(-\sum_{i=1}^{3}p_{\infty}^{i}-\sum_{i=1}^{3}p_{0}^{i})). 
\end{align*}
The fact that $f$ extends translates to
the fact that $\frac{df}{f}$ extends to a meromorphic form
$\frac{dF}{F}$ on $\mathcal{C}_{\epsilon}$. This in turns means that 
$\frac{df}{f}$ belongs to the kernel of the co-boundary operator in
the long sequence of cohomology given by the sequence \eqref{vedid}:

\begin{equation*}
    \partial_{f}\colon 
    H^{0}(C,\omega(\sum_{i=1}^{3}p_{\infty}^{i}+\sum_{i=1}^{3}p_{0}^{i}))
    \rightarrow H^{1}(C, \OO_{C}).
\end{equation*}
Then ${\mathcal{T}}_{f}$ is contained in the orthogonal subspace of
$\gen{\frac{df}{f}}$ with respect to the cup product

\begin{equation*}\label{vedidtris}
    H^{1}(C,T_{C}(-\sum_{i=1}^{3}p_{\infty}^{i}-\sum_{i=1}^{3}p_{0}^{i}))\otimes
    H^{0}(C,\omega(\sum_{i=1}^{3}p_{\infty}^{i}+\sum_{i=1}^{3}p_{0}^{i}))
    \rightarrow H^{1}(C, \OO_{C}).
\end{equation*}
Since ${\dim}_{\CC}{\mathcal{T}}_{g}=2g+3$, 
${\mathcal{T}}_{f}\subset\gen{\frac{df}{f}}^{\perp}$ and 
${\dim}_{\CC}\gen{\frac{df}{f}}^{\perp}=2g+3$ then
it follows ${\mathcal{T}}_{f}=\gen{\frac{df}{f}}^{\perp}$ and 
that ${\mathcal{T}}_{g}$ is  smooth around its general
special points. Now we turn to the pull-back family 
$\pi\colon{\mathcal{C}}\to\Delta$ of the universal family 
$\rho\colon{\mathcal{U}}\to U$. 
By flatness of $\rho\colon{\mathcal{U}}\to U$
we also have the relative canonical fibration $\pi'\colon \p\to\Delta$ and inside it a relative fibration 
$\pi'\colon{\mathcal{X}}\to\Delta$ where the general fibre is
a balanced scroll $X_{t}$.
\begin{prop}\label{specis}
Let $C\subset\p(H^0(\omega_C)^*)$ be a tetragonal canonical curve
of genus $g$. Assume that $C$ is in the closure in
$\mathcal T_g$ of the class of the curves studied in
Subsection~\ref{ssec:stc}; 
then it is contained in a surface $S$ such that: 
\begin{equation*}
\deg(S)\leq \frac{4}{3}g-\frac{5}{3}.
\end{equation*} 
\end{prop}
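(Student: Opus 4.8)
The plan is to deduce the bound on the special (non-balanced) curve from the bounds already established for the balanced ones by a degeneration combined with a semicontinuity argument, using precisely the family set up before the statement. Thus I start from the pull-back family $\pi\colon\mathcal{C}\to\Delta$ whose central fibre is $C_0=C$ and whose general fibre $C_t$ lies in $\mathcal{T}_g\setminus\mathcal{T}_g^s$, \ie is a balanced tetragonal curve of the kind treated in Subsection~\ref{ssec:stc}, together with the relative scroll $\pi'\colon\mathcal{X}\to\Delta$, so that $\mathcal{C}\subset\mathcal{X}$ and the general fibre $X_t$ is a balanced three-dimensional scroll of degree $g-3$.

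On the general fibre I apply the appropriate one of Propositions~\ref{4gon}, \ref{4gon-1} and \ref{4gon-2}, according to the residue of $g$ modulo $3$. Each of them produces an effective divisor $S_t\subset X_t$ of class $2H-bF$, for a value of $b$ depending only on $g$ and hence constant along the family, with $C_t\subset S_t$ and $\deg(S_t)=2(g-3)-b\le\frac{4}{3}g-\frac{5}{3}$, the last being the worst of the three bounds (attained for $g=3k-1$).

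Now I consider the ideal sheaf $\II_{\mathcal{C}/\mathcal{X}}$ twisted by the relative line bundle $\OO_{\mathcal{X}}(2H-bF)$. Since $\OO_{\mathcal{C}}$ is flat over $\Delta$, the fibres being smooth curves of fixed genus $g$, the sheaf $\II_{\mathcal{C}/\mathcal{X}}(2H-bF)$ is flat over $\Delta$ and its restriction to the fibre over $t$ is $\II_{C_t/X_t}(2H-bF)$. By upper semicontinuity of $t\mapsto h^0(X_t,\II_{C_t/X_t}(2H-bF))$, and since this number is at least $1$ on the general fibre by the previous step, it is at least $1$ at $t=0$. Hence there is a non-zero section on $X_0$ whose zero locus is an effective divisor $S:=S_0\in\abs{2H-bF}$ on $X_0$ containing $C$. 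Computing the degree on the central scroll, which is again a three-dimensional scroll of degree $g-3$ so that $H^3=g-3$ and $H^2F=1$, I obtain $\deg(S)=(2H-bF)\cdot H^2=2(g-3)-b=\deg(S_t)\le\frac{4}{3}g-\frac{5}{3}$, as required.

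The main obstacle is the construction and flatness of the relative scroll $\mathcal{X}\to\Delta$ containing $\mathcal{C}$ across the special fibre $t=0$: one must guarantee that the scrolls $X_t$ fit into a flat family whose central member is the (non-balanced) scroll of $C$ and that the relative class $2H-bF$ specialises to the corresponding class on $X_0$. This is exactly where the deformation-theoretic preparation preceding the statement enters, through the identification of $\mathcal{T}_f$ and the smoothness of $\mathcal{T}_g$ at its general special point, which produce the required one-parameter degeneration of $C$ to a balanced curve. I emphasise that, as already remarked, the divisor $S_0$ so obtained need not satisfy the hypotheses of Lemma~\ref{lem:acm}---it may be reducible or non-reduced, and need not be the birational image of a rational surface---so this argument yields only the containment $C\subset S$ and the degree bound, and not a Waring estimate for $F_{\eta_1,\eta_2}$.
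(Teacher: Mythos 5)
Your proposal is correct and follows the paper's overall strategy---degenerate $C$ inside $\mathcal{T}_g$ over a disk $\Delta$ to the balanced curves of Subsection~\ref{ssec:stc}, together with the relative scroll $\mathcal{X}\to\Delta$, and pass to the limit---but you implement the limiting step differently. The paper carries out the constructions of Propositions~\ref{4gon}--\ref{4gon-2} \emph{in family}: it builds a $(k-2)$-multisection of a relative section $\tau\colon\p^1_\Delta\to\Delta$ of the scroll and a $2$-section of $\mathcal{C}\to\Delta$, performs the relative projections to $\check\p^2\times\Delta$, blows up along the resulting $4$- or $5$-multisection to obtain a flat family of surfaces $\pi''\colon\mathcal{S}\to\Delta$, and invokes \cite[Proposition III.9.8]{H} so that the flat limit $S_0$ has the same degree as the general fibre. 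You instead retain only the divisor class $[S_t]=2H-bF$ on $X_t$ and apply upper semicontinuity to $t\mapsto h^0(X_t,\II_{C_t/X_t}(2H-bF))$, producing a member of $\abs{2H-bF}$ through $C_0$ on the special scroll. This is shorter and sidesteps the delicate relative-projection construction, at the price of two points worth making explicit: (i) the relative line bundle $\OO_{\mathcal{X}}(2H-bF)$---in particular the ruling class $F$---must extend across the special, non-balanced fibre (this holds, since the Picard group of a $\p^2$-bundle over $\p^1$ is $\Z H\oplus\Z F$ for any splitting type, but it is the one place where the jump of scroll type could have interfered); and (ii) in the case $g\equiv 2\pmod 3$, Proposition~\ref{4gon-1} allows two values of $b$, so $b$ is not literally constant along the family; you must fix the value occurring on a dense subset of $\Delta\setminus\{0\}$, after which closedness of the locus $\{h^0\ge 1\}$ still forces $h^0\ge 1$ at $t=0$. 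What the paper's route buys is a more explicit description of the limit surface (a degeneration of a blow-up of $\check\p^2$); what yours buys is brevity and independence from the generality assumptions on the relative projections. Both, as you rightly stress, deliver only the containment and the degree bound, since the limit surface may fail the hypotheses of Lemma~\ref{lem:acm}---exactly the caveat the paper itself attaches to this proposition.
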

\begin{proof} We assume that $g=3k-1$ and we will use Proposition~\ref{4gon-1}.
The other cases are similar and give better bounds. Consider 
the relative canonical fibration $\pi'\colon \p\to\Delta$ 
and inside it a relative fibration 
$\pi'\colon{\mathcal{X}}\to\Delta$ where the general fibre is
a balanced scroll $X_{t}$ as we have done above.
Up to restrict $\Delta$ if
necessary, we can construct a fibered surface $\tau\colon\p^{1}_{\Delta}\to\Delta$ whose
fibre is a general section for the scroll $X_{t}$. By generality
of the section $\tau^{-1}(t)$ for the scroll $X_{t}$, it follows
that $\tau^{-1}(0)$ is a general section of $X_{0}$. Then, by the
generality of all the projections involved in our method, taking a 
$k-2$-multisection of $\tau\colon\p^{1}_{\Delta}\to\Delta$ which gives exactly $k-2$ points
on the fibre $\tau^{-1}(0)$, we can perform relative projections
which send the general fibre of $\pi'\colon{\mathcal{X}}
\to\Delta$ on a $\p(\OO_{\p^{1}}\oplus\OO_{\p^{1}}(1)\oplus\OO_{\p^{1}}(1))\subset\check\p^{4}$
and the
special fibre $X_{0}$ on a suitable $3$-fold.
Then, up to restrict $\Delta$ if
necessary again, we can construct a $2$-section of 
$\pi\colon{\mathcal{C}}\to\Delta$ in order to relative project to 
$\check\p^{2}$. By \cite[Proposition
III.9.8]{H} we can
construct a flat 
family of surfaces $\pi'' \colon{\mathcal S}\to
\Delta$ whose general fibre is embedded into the general fibre of 
the relative canonical fibration $\pi'\colon \p\to\Delta$. These surfaces are obtained by the blowing up of 
$\check\p^{2}\times\Delta$ along a $4$ or $5$ multisection 
by (the proof of) Proposition~\ref{4gon-1}. Since the general fibre of $\pi'' $ 
has degree $=\frac{4}{3}g-\frac{5}{3}$
then the limiting surface $S_{0}$ still satisfies this bound.
\end{proof}

\subsection{Higher order gonality}\label{sec:agg}

Now we consider an $n$-gonal curve. Following a referee's comment 
we will show that 
it is possible to construct surfaces as in Subsection~\ref{ssec:stc}, 
but it turns out that the degree of them is too big.
For simplicity, we show this in the easiest case only, 
\ie the case of the $n$-gonal curve such that $n$ 
divides the genus $g$ of $C$.

More precisely, we can write $g=(n-1)k$, and we can proceed as for Proposition~\ref{4gon}: 
so we \emph{suppose} that $C\subset\p(H^0(\omega_C)^*)$ is an $n$-gonal canonical curve
of genus $g=(n-1)k$, where $k\geq 2$, contained in a rational surface $S$ in a balanced scroll 
$X=\p(\OO_{\p^1}(k-1)\oplus\dotsb\oplus\OO_{\p^1}(k-1))$ of dimension $n-1$. 

Let us \emph{suppose also} that the projection $\pi\colon\check\p^{g-1}\dashrightarrow\check\p^2$
from the linear space generated by $(k-1)$ $\check\p^{n-2}$'s general fibres of the scroll and $(n-4)$ points of $C$ 
is generically 1--1 (\ie $S$ is the blowing-up of this 
$\check\p^2$). 

Let us find now the degree of $S$. 

Indeed, if we project from $(k-1)$ $\check\p^{n-2}$'s general fibres of the scroll,  we arrive to a $\check\p^{n-2}$
which contains the 
image of the curve, $Z'$. We have 
\begin{align*}
\deg(Z')&=2k(n-1)-2-(k-1)n\\
&=(k+1)(n-2).
\end{align*}
If then we choose $(n-4)$ points on $Z'$ and we project again from these points, 
we find a plane curve $Z$ of degree $k(n-2)+2$. 
As above, the $V_i$'s are the singular points of $Z$, of 
simple multiplicity $m_i$. 
In this case, the adjunction formula gives, in the same way as 
we obtained Formula~\eqref{adj}
\begin{equation*}\label{adjgen}
\sum_i m_i(m_i-1)=nk(nk-1)-4k^2(n-1). 
\end{equation*}

If we take a general $(n-2)$-plane $\Pi_t$, $t\in\p^1$, of the ruling of 
the scroll $\rho\colon X\to\p^1$, then $C\cap \Pi_t$ 
is given by $n$ points $\{P_{1t},\dotsc,P_{nt}\}$. Now 
take a general section of the scroll. 
We have now $(n+1)$ (general) points in  $\check\p^{n-2}\cong \Pi_t$, and 
through them there pass only one rational normal curve (of degree $(n-2)$). 
If we project these curves to $\check\p^2$, we see that 
there exists a pencil $\Lambda$ of rational curves of degree $(n-2)$ which 
cuts the $g^1_n$ on $Z$. 
Let $A_1,\dotsc,A_{(n-2)^2}$ be the base points of the
pencil; as above, we write:
\begin{equation}\label{prima}
Q'_t\mid_Z=\sum_{i=1}^{(n-2)^2} n_iA_i+ \sum_{i=1}^{n}P'_{it},
\end{equation}
where $Q'_t\in\Lambda$ and the $P'_{it}$'s are the projection
of the points $P_{it}$'s, and $t\in\p^1$, 
as in the proof of Proposition~\ref{4gon}.
Calculating the degree, we obtain, as in Formula~\eqref{equat}
\begin{equation}\label{seconda}
\sum_{i=1}^{(n-2)^2} n_i=(n-2)^2k+n-4.
\end{equation}
We note that we can write an inequality as in Formula~\eqref{ineq}.
It is not restrictive to suppose that $V_i=A_i$ 
(for the first $(n-2)^2$ $V_i$'s) and that $m_i=n_i$; then, by Formulas~\eqref{prima} 
and \eqref{seconda} we deduce
\begin{align*}
\deg(S)&=(k(n-2)-1)^2-\sum_i(m_i-1)^2\\
&=kn^2-5kn+8k-n^2+5n-7+\sum_{i>(n-2)^2}(m_i-1), 
\end{align*}
which unfortunately is greater than $2g-3$, which is the estimate of 
Iliev-Ranestad and Ciliberto-Harris, if $n\gg 0$.

\providecommand{\bysame}{\leavevmode\hbox to3em{\hrulefill}\thinspace}
\providecommand{\MR}{\relax\ifhmode\unskip\space\fi MR }
% \MRhref is called by the amsart/book/proc definition of \MR.
\providecommand{\MRhref}[2]{%
\href{http://www.ams.org/mathscinet-getitem?mr=#1}{#2}
}
\providecommand{\href}[2]{#2}

\end{document}